\newcommand{\xxi}{\dot{\sigma}}
\newtheorem{thm}{Theorem}[section]
\newtheorem{lmm}[thm]{Lemma}
\newtheorem{cor}[thm]{Corollary}
\newcommand{\avg}[1]{\bigl\langle #1 \bigr\rangle}
\newcommand{\bigavg}[1]{\biggl\langle #1 \biggr\rangle}
\newcommand{\ee}{\mathbb{E}}
\newcommand{\ra}{\rightarrow}
\newcommand{\rr}{\mathbb{R}}
\newcommand{\smallavg}[1]{\langle #1 \rangle}
\newcommand{\xp}{X^\prime}
\newcommand{\xx}{\mathcal{X}}
\newcommand{\fpar}[2]{\frac{\partial #1}{\partial #2}}
\newcommand{\mpar}[3]{\frac{\partial^2 #1}{\partial #2 \partial #3}}
\begin{document}
\title{Spin glasses and Stein's method}
\author{Sourav Chatterjee}
\thanks{The author's research was partially supported by NSF grant DMS-0707054 and a Sloan Research Fellowship}
\address{\newline367 Evans Hall \#3860\newline
Department of Statistics\newline
University of California at Berkeley\newline
Berkeley, CA 94720-3860\newline
{\it E-mail: \tt sourav@stat.berkeley.edu}\newline 
{\it URL: \tt http://www.stat.berkeley.edu/$\sim$sourav}}
\keywords{Spin glass, Sherrington-Kirkpatrick model, TAP equations, high temperature solution, Stein's method}
\subjclass[2000]{60K35, 82B44}

\begin{abstract}
We introduce some applications of Stein's method in the high temperature analysis of spin glasses. Stein's method allows the direct analysis of the Gibbs measure without having to create a cavity. Another advantage is that it gives limit theorems with total variation error bounds, although the bounds can be suboptimal. A surprising byproduct of our analysis is a relatively transparent explanation of the Thouless-Anderson-Palmer system of equations. Along the way, we develop Stein's method for mixtures of two Gaussian densities. 
\end{abstract}

\maketitle

\section{Introduction and results}\label{intro}
\subsection{The Sherrington-Kirkpatrick model}
Let $N$ be a positive integer and let $\Sigma_N = \{-1,1\}^N$. A typical element of $\sigma = (\sigma_1,\ldots,\sigma_N)\in \Sigma_N$ is called a `configuration' of spins. Let $g = (g_{ij})_{1\le i<j\le N}$ be a collection of independent standard Gaussian random variables, called the `disorder' in our context. Given a realization of $g$, fix any $\beta > 0$ and $h\in \rr$ and define a probability distribution $G_N$ on $\Sigma_N$  as 
\[
G_N(\sigma) = Z_N^{-1}\exp\biggl(\frac{\beta}{\sqrt{N}} \sum_{1\le i<j\le N} g_{ij} \sigma_i \sigma_j + h\sum_{1\le i\le N} \sigma_i\biggr).
\]
Here $Z_N = Z_N(\beta,h)$ is the normalizing constant (partition function), and $G_N$ is the `Gibbs measure'. What we have just defined is the well-known Sherrington-Kirkpatrick (SK) model of spin glasses \cite{sk75}. The parameter $\beta$ is called the `inverse temperature' of the model, and $h$ is called the `external magnetic field', following the conventions of Talagrand \cite{talagrand03}. Physicists would replace the $h$ with $\beta h$, but the definitions are mathematically equivalent.

Configurations chosen independently from the Gibbs measure (i.e., given the disorder) are denoted by $\sigma^1, \sigma^2$, etc. These are called `replicas' in physics. 
If $\sigma^1,\ldots,\sigma^k$ are replicas and $f$ is a function on $\Sigma_N^k$, then as usual we define
\[
\avg{f(\sigma^1,\ldots,\sigma^k)} := \sum_{\sigma^1,\ldots,\sigma^k} f(\sigma^1,\ldots,\sigma^k) G_N(\sigma^1)\cdots G_N(\sigma^k).
\]
In the terminology of disordered systems, $\smallavg{f(\sigma)}$ is known as the quenched average of $f(\sigma)$.

The `overlap' between a pair of replicas $\sigma^1$ and $\sigma^2$, chosen independently from the Gibbs measure, is defined as
\begin{equation}\label{r12}
R_{12} := \frac{1}{N}\sum_{i=1}^N \sigma_i^1 \sigma_i^2.
\end{equation}
The `high temperature phase' of the SK model corresponds to the set of $(\beta,h)$ for which  there is a number $q = q(\beta, h)< 1$ such that the overlap $R_{12}$ is approximately equal to $q$ with high probability under the Gibbs measure. This can be made precise in various ways, and the form that will be most suitable for us in this article is:
\begin{equation}\label{high}
\ee\avg{(R_{12}-q)^4} \le \frac{C(\beta,h)}{N^2}, 
\end{equation}
where $C(\beta, h)$ is a constant that depends only on $\beta$ and $h$.
(At this point, let us declare that throughout this paper, statements like  ``$T\le C(\beta,h)$'' stands for ``the term $T$ can be bounded by a constant that depends only on $\beta$ and~$h$''.)
It is known (see e.g.\ \cite{talagrand03}, p.\ 72) that the constant $q$ must satisfy 
\begin{equation}\label{qdef}
q = \ee \tanh^2(\beta z\sqrt{q}  + h),
\end{equation}
where $z$ is a standard Gaussian random variable.

It is not very difficult to show that a consequence of the concentration of the overlap is that small collections of spins become approximately independent under the Gibbs measure (see \cite{talagrand03}, Theorem 2.4.10). However, they are not identically distributed unless $h = 0$. One important objective of the theory of spin glasses is to find ways to compute the marginal distributions of the spins. A way to do this is via the Thouless-Anderson-Palmer (TAP) equations, which we study later in this article.

The high temperature phase of the SK model under zero external field was studied rigorously by  Aizenman, Lebowitz, and Ruelle~\cite{alr87}. A more systematic and powerful approach via stochastic calculus was developed by Comets and Neveu \cite{cometsneveu95} and extended by Tindel \cite{tindel05}. The high temperature phase for $h \ne 0$ was rigorously investigated by Fr\"ohlich and Zegarli\'nski \cite{frohlichzegarlinski87} and more extensively by Shcherbina \cite{shcherbina97} and Talagrand~\cite{talagrand98}. An extremely thorough  rigorous treatment of the high temperature phase with many new results appeared in Chapter 2 of Talagrand's book \cite{talagrand03}. An important result, shown in~\cite{talagrand03}, Theorem 2.5.1, is that there exists a constant $\beta_0 >0$ such that \eqref{high} holds whenever $\beta \le \beta_0$. In this manuscript, this is only result we borrow from the existing theory of spin glasses. We did not attempt to prove this via Stein's method. 

As of now, even the low temperature phase is somewhat mathematically tractable, following the deep contributions of Guerra \cite{guerra03}, Guerra and Toninelli \cite{guerra02}, Talagrand \cite{talagrand06}, and Panchenko \cite{panchenko}. The recent paper of Comets, Guerra, and Toninelli \cite{cometsetal05} connecting the SK model and its lattice counterpart is also of interest.
For a review of the extensive but mostly unrigorous developments in the theoretical physics literature, let us refer to the classic text of M\'ezard, Parisi, and Virasoro \cite{mezardetal87}.

\subsection{The TAP equations}
Since the spins can take only two values, the quenched distribution of the spin at site $i$ is completely described by the quantity $\smallavg{\sigma_i}$. One of the main approaches (as outlined in \cite{mezardetal87}) to understanding the high temperature phase of the SK model is to understand the quantities $\smallavg{\sigma_1},\ldots,\smallavg{\sigma_N}$ via the Thouless-Anderson-Palmer system of equations: 
\begin{equation}\label{tapeq}
\avg{\sigma_i} \approx \tanh\biggl(\frac{\beta}{\sqrt{N}}\sum_{j\ne i} g_{ij}\avg{\sigma_j} + h - \beta^2 (1-q)\avg{\sigma_i}\biggr), \ \ i =1, \ldots, N.
\end{equation}
Here $\approx$ means, vaguely, `approximately equal with high probability'. Physicists usually write exact equalities in such cases.

This self-consistent system of equations has a unique solution with high probability if $\beta$ is small. It was physically argued by Thouless, Anderson, and Palmer \cite{tap77} that the quantities $\smallavg{\sigma_1},\ldots,\smallavg{\sigma_N}$ must satisfy these equations `in the large $N$ limit' at any temperature and external field. The first rigorous proof of the validity of the TAP equations in the high temperature phase (where \eqref{high} holds) appeared twenty-six years after the publication of the physics paper, in Talagrand's book~(\cite{talagrand03}, Theorem 2.4.20). However, Talagrand's theorem in \cite{talagrand03} does not show that {\it all} $N$ equations hold simultaneously with high probability. This has been proved more recently (Talagrand, private communication), and is going to appear in the forthcoming edition of \cite{talagrand03}.

Talagrand's proof is based on a remarkable rigorous formulation of the cavity method, which involves studying the system after `removing the last spin'. This procedure is known as `creating a cavity'. Now, if one wants to study the Gibbs measure directly, without having to resort to the essentially inductive process of creating a cavity, is there a way to proceed? This is a key focus in this paper. Let us begin by outlining the approach for the case of the TAP equations and understanding how they arise.

\subsection{The Onsager correction term}
The first step is to observe that the conditional expectation of $\sigma_i$ given $(\sigma_j)_{j\ne i}$ under the Gibbs measure is simply $\tanh(\beta \ell_i + h)$, where $\ell_i$ is the {\it local field} at site $i$, defined as
\begin{equation}\label{li}
\ell_i = \ell_i(g,\sigma) := \frac{1}{\sqrt{N}} \sum_{j\ne i} g_{ij} \sigma_j.
\end{equation}
The proof of this is quite trivial, following directly from the form of the Gibbs measure. It follows that
\begin{equation}\label{callen}
\avg{\sigma_i} = \avg{\tanh(\beta \ell_i + h)}, \ \ i=1,\ldots,N.
\end{equation}
Thus, if we could understand the distribution of the local fields under the Gibbs measure, the problem of computing $\langle \sigma_i\rangle$ would be solved. This motivates the study of the limiting behavior of the local fields.

Incidentally, the na\"ive mean field heuristic would dictate that the `average can be moved inside the $\tanh$', and
\begin{align*}
\avg{\sigma_i} = \avg{\tanh(\beta \ell_i + h)} &\stackrel{?}{\approx} \tanh\bigl(\beta \avg{\ell_i} + h\bigr)\\
&=\tanh\biggl(\frac{\beta}{\sqrt{N}}\sum_{j\ne i} g_{ij}\avg{\sigma_j} + h\biggr).
\end{align*}
However, the na\"ive heuristic does not work for the SK model. This is not surprising since the local fields are unlikely to be concentrated. The famous discovery of Thouless, Anderson, and Palmer \cite{tap77} is that the average can still be moved inside the $\tanh$, but only after adding what has come to be known as the {\it Onsager correction term}, $-\beta^2(1-q)\avg{\sigma_i}$. As stated before, this gives the TAP equations
\begin{equation*}
\avg{\sigma_i} \approx \tanh\biggl(\frac{\beta}{\sqrt{N}}\sum_{j\ne i} g_{ij}\avg{\sigma_j} + h - \beta^2 (1-q)\avg{\sigma_i}\biggr), \ \ i =1, \ldots, N.
\end{equation*}
In view of the approximate independence of the spins under the Gibbs measure at high temperature, it seems natural to surmise that the local fields would be approximately Gaussian. Surprisingly, this is not the case. Rather, the explanation for the Onsager correction is hidden in a property of convex combinations of pairs of Gaussian distributions.

\subsection{Onsager correction and mixture Gaussians}
For any $\mu\in \rr$, $\sigma > 0$, let $N(\mu, \sigma^2)$ denote the Gaussian distribution with mean $\mu$ and variance $\sigma^2$, with density function
\begin{equation}\label{normdens}
\phi_{\mu,\sigma^2}(x) = \frac{1}{\sqrt{2\pi} \sigma} e^{-(x-\mu)^2/2\sigma^2}.
\end{equation}
The mixture (i.e.\ convex combination) of two Gaussian densities has a curious connection with the $\tanh$ function. Suppose $X$ is a random variable following the mixture density
$p\phi_{\mu_1,\sigma^2} + (1-p)\phi_{\mu_2,\sigma^2}$. 
Suppose $\mu_1 > \mu_2$, and let
\begin{align}\label{ab}
a = \frac{\mu_1 - \mu_2}{2\sigma^2}, \ b = \frac{1}{2}\log \frac{p}{1-p} - \frac{\mu_1^2 - \mu_2^2}{4\sigma^2}.
\end{align}
Then a simple computation gives
\begin{equation}\label{tanh}
\ee \tanh(aX + b) = \tanh(a\ee(X) + b - (2p -1)a^2\sigma^2).
\end{equation}
That is, the `expectation can be moved inside the $\tanh$', after incurring the quadratic `correction term' $-(2p-1) a^2 \sigma^2$. (The proof of this identity is sketched in the Subsection \ref{steinmix}. With other values of $a$ and $b$, the expectation can still be moved inside, but the correction term will no longer have a simple form.) The similarity with the Onsager correction is more than superficial: in fact, it turns out that the distribution of the local fields under the Gibbs measure can be approximated by a mixture of two Gaussian densities, and the correction term in the above equation indeed corresponds to the Onsager correction term in the TAP equations.

\subsection{Limit law for the local fields}
The precise result about the limiting distribution of the local fields can be described in the following way. For each $1\le i\le N$, let
\begin{align}
r_i &= r_i(g) := \frac{1}{\sqrt{N}} \sum_{j\ne i} g_{ij} \avg{\sigma_j} - \beta(1-q)\avg{\sigma_i}, \label{ri} \\
p_i &:=\frac{e^{\beta r_i + h}}{e^{\beta r_i + h} + e^{-\beta r_i - h}}, 
\end{align}
and let $\nu_i$ be the random probability measure on $\rr$ with the mixture Gaussian density
\[
p_i \phi_{r_i + \beta (1-q), 1-q} + (1-p_i)\phi_{r_i - \beta (1-q), 1-q}.
\]
Then the distribution of the local field $\ell_i$ (defined in \eqref{li}) under the Gibbs measure is close to $\nu_i$, in the sense that the difference between the two (random) measures converges in probability to the zero measure. A more quantitative result is as follows.
\begin{thm}\label{mainthm}
Suppose $\beta$ and $h$ are such that \eqref{high} is satisfied for some $q< 1$. Let $\nu_1,\ldots,\nu_N$ be defined as above. Then for any bounded measurable $u:\rr \ra \rr$ and any $1\le i\le N$, we have
\[
\ee\biggl(\avg{u(\ell_i)} - \int_{\rr} u(x) \nu_i(dx)\biggr)^2 \le \frac{C(\beta,h) \|u\|_\infty^2}{\sqrt{N}},
\]
where $C(\beta,h)$ is a constant depending only on $\beta$ and $h$.
\end{thm}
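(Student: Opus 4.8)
The plan is to run Stein's method for the mixture Gaussian law $\nu_i$, using the machinery for mixtures of two Gaussians developed in Subsection~\ref{steinmix}. First I record the Stein characterization of $\nu_i$. If $\rho_i$ denotes its density, a direct computation of the logarithmic derivative gives
\[
\frac{\rho_i'(x)}{\rho_i(x)} \;=\; -\frac{1}{1-q}\Bigl(x - r_i - \beta(1-q)\tanh(\beta x + h)\Bigr),
\]
the point being that, for this particular pair of weights $p_i$, $1-p_i$, the posterior probability of the first Gaussian component at a point $x$ equals $\tfrac12\bigl(1+\tanh(\beta x+h)\bigr)$, so that the $x$-dependent convex combination of the two component means is $r_i + \beta(1-q)\tanh(\beta x+h)$. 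Hence $X\sim\nu_i$ if and only if $\ee\bigl[(1-q)\varphi'(X) - \bigl(X - r_i - \beta(1-q)\tanh(\beta X+h)\bigr)\varphi(X)\bigr]=0$ for all sufficiently regular $\varphi$. Given bounded $u$, let $f=f_{u,i}$ be the solution of the Stein equation
\[
(1-q)f'(x) - \bigl(x - r_i - \beta(1-q)\tanh(\beta x+h)\bigr)f(x) \;=\; u(x) - \int u\,d\nu_i;
\]
as the drift is a bounded perturbation of $-x/(1-q)$, the usual integral representation of $f$ yields $\|f\|_\infty + \|f'\|_\infty \le C(\beta,h)\|u\|_\infty$. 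Writing $\mathcal{T}_i f$ for the left-hand side above, and using that $\int u\,d\nu_i$ is a constant for the Gibbs average, we get $\langle u(\ell_i)\rangle - \int u\,d\nu_i = \langle\mathcal{T}_i f(\ell_i)\rangle$, so it suffices to bound $\ee\bigl[\langle\mathcal{T}_i f(\ell_i)\rangle^2\bigr]$ by $C(\beta,h)\|u\|_\infty^2/\sqrt N$.

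The second step is to unfold $\langle\mathcal{T}_i f(\ell_i)\rangle$ into Gibbs covariances. Combining \eqref{callen} in the form $\langle\tanh(\beta\ell_i+h)\,f(\ell_i)\rangle = \langle\sigma_i f(\ell_i)\rangle$ (legitimate because $\ell_i$ is a function of $(\sigma_j)_{j\ne i}$), the definition \eqref{ri} of $r_i$, and $\ell_i = N^{-1/2}\sum_{j\ne i}g_{ij}\sigma_j$, one finds after rearrangement
\[
\langle\mathcal{T}_i f(\ell_i)\rangle \;=\; (1-q)\langle f'(\ell_i)\rangle \;-\; \frac{1}{\sqrt N}\sum_{j\ne i}g_{ij}\,\cov\bigl(\sigma_j,f(\ell_i)\bigr) \;+\; \beta(1-q)\,\cov\bigl(\sigma_i,f(\ell_i)\bigr),
\]
where $\cov$ is covariance under the Gibbs measure. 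I would then process each $\cov(\sigma_j,f(\ell_i))$, $j\ne i$, via the spin-flip decomposition $f(\ell_i) = f_e(\ell_i^{(j)}) + \sigma_j f_o(\ell_i^{(j)})$, where $\ell_i^{(j)}$ is the local field with site $j$ deleted and $\|f_o\|_\infty\le N^{-1/2}|g_{ij}|\,\|f'\|_\infty$; and the leftover dependence of $\ell_j$ on $\sigma_i$ — which enters through $\cov(\sigma_j,\psi)=\cov(\tanh(\beta\ell_j+h),\psi)$ and the fact that $\ell_j$ contains the summand $N^{-1/2}g_{ij}\sigma_i$ — by a first-order expansion. Two structural cancellations appear. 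The diagonal part of the first decomposition equals $\bigl(N^{-1}\sum_{j\ne i}g_{ij}^2(1-\langle\sigma_j\rangle^2)\bigr)\langle f'(\ell_i)\rangle$; since $N^{-1}\sum_{j}(1-\langle\sigma_j\rangle^2)=1-\langle R_{12}\rangle$ and \eqref{high} pins $\langle R_{12}\rangle$ near $q$, this equals $(1-q)\langle f'(\ell_i)\rangle$ up to a small error, cancelling the first term. The back-reaction part contributes $\approx\beta(1-q)\cov(\sigma_i,f(\ell_i))$ (again evaluating the relevant empirical averages by \eqref{high}), cancelling the last term. This second cancellation is exactly the mechanism behind the Onsager correction: the $\tanh$-drift carried by the Stein operator of $\nu_i$ is what absorbs the back-reaction of $\sigma_i$ on the other local fields.

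After these cancellations one is left with a collection of error terms — covariances $\cov(\sigma_j,\cdot)$ in which the two relevant local fields are nearly uncorrelated under the Gibbs measure, discrepancies between empirical averages $N^{-1}\sum_j g_{ij}^2(\cdots)$ and their deterministic limits, and finite-difference remainders from the expansions — each of which I would bound in squared expectation by $C(\beta,h)\|u\|_\infty^2/\sqrt N$ by combining \eqref{high} with Gaussian integration by parts and the bounds on $f,f'$ (here the boundedness $|\langle\mathcal{T}_i f(\ell_i)\rangle|\le 2\|u\|_\infty$ is convenient, serving as a bounded multiplier that turns first-order estimates into the second-moment bound); the various Cauchy--Schwarz steps in these estimates are what make the rate $\sqrt N$ rather than $N$. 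I expect this error analysis to be the main obstacle, for a subtle reason: the individual covariances $\cov(\sigma_j,\cdot)$ are not small pointwise — each is of size comparable to $1-\langle\sigma_j\rangle^2$, whose average over $j$ is $1-q$, not $0$ — so their smallness is only produced after summing against the independent couplings $g_{ij}$ and invoking \eqref{high}, and the delicate bookkeeping is to peel off the two cancellations above (which involve the value of $q$) cleanly from the genuine error. A secondary nuisance is that for $u$ only bounded measurable the finite-difference remainders require first mollifying $u$ to a Lipschitz function, with the general case recovered afterwards by approximation.
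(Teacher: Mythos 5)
Your Stein characterization of $\nu_i$ is right, and you have correctly identified the two cancellations that must occur --- the Gibbs-variance of $\sigma_j$ averaging to $1-q$ against $\langle f'\rangle$, and the back-reaction of $\sigma_i$ producing the $\beta(1-q)\cov(\sigma_i,f)$ term --- which is indeed the mechanism behind the Onsager correction. However, your route to extracting these terms is genuinely different from the paper's, and it runs into a real obstruction that the paper's method is specifically engineered to avoid.

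The paper does not expand $\cov(\sigma_j,f(\ell_i))$ by a spin-flip at site $j$. Instead it defines $h_j = N^{-1/2}\langle\dot\sigma_j\, f(\ell_1,r_1)\rangle$, observes $\sum_j g_{1j}h_j = \langle(\ell_1-r_1)f\rangle$, and invokes Lemma~\ref{mainlmm} (Gaussian integration by parts with an $L^2$ error bound) to replace this by $\sum_j\fpar{h_j}{g_{1j}}$. Crucially, $\fpar{h_j}{g_{1k}}$ is computed \emph{exactly} via the disorder-derivative formula \eqref{deriv1} and the chain rule $\fpar{}{g_{1k}}f(\ell_1,r_1) = f_1\cdot N^{-1/2}\sigma_k + f_2\cdot\fpar{r_1}{g_{1k}}$, so only the first partials $f_1=\fpar{f}{x}$ and $f_2=\fpar{f}{\mu}$ ever appear --- both bounded by Lemma~\ref{steinlmm} for $u$ merely bounded and measurable. (Incidentally, this is also why the paper insists on $f(x,\mu)$ as a function of two variables: the $f_2$ derivative must be tracked, which your sketch omits.)

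Your spin-flip decomposition, by contrast, needs the approximation $f_o(y)=\tfrac12\bigl(f(y+\ve_j)-f(y-\ve_j)\bigr)\approx \ve_j f'(y)$ with $\ve_j=N^{-1/2}g_{ij}$ in order to produce the $\langle f'\rangle$ term, and the remainder is controlled only by the modulus of continuity of $f'$. But for $u$ only bounded measurable, the Stein solution $f$ is $C^{0,1}$ and not $C^1$: from the Stein equation itself, $f'$ inherits every jump discontinuity of $u$. So the finite-difference remainder is \emph{not} controlled by any norm of $f$ available from Lemma~\ref{steinlmm}. Your proposed fix --- mollify $u$ to $u_\delta$ and pass back --- is circular here: controlling $\ee\langle|u-u_\delta|(\ell_i)\rangle$ requires an a priori bound on the density of $\ell_i$ under the Gibbs measure, which is essentially the content of the theorem. (One could in principle bootstrap from the Lipschitz case, but that is a substantially different and longer argument, and it would not recover the stated $\|u\|_\infty^2/\sqrt N$ rate without further work.) This is the gap: not the tedium of the error analysis, but the fact that your central identity is a discrete Taylor expansion requiring one more derivative of $f$ than you have, whereas the paper's central identity is an exact derivative in the disorder. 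The two smaller structural claims you would also need to verify --- that $N^{-1}\sum_j g_{ij}^2(1-\langle\sigma_j\rangle^2)\approx 1-q$ (which needs decoupling of $g_{ij}^2$ from the Gibbs quantities, not just \eqref{high}) and the quantitative first-order $\tanh$ expansion --- are handled cleanly in the paper by the estimates \eqref{est2}, \eqref{est30}, \eqref{est31}, and Lemma~\ref{jkbd} applied to the exact derivatives; in your framework they would have to be re-derived from scratch.
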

\noindent 
Taking $u(x) = \tanh(\beta x + h)$ and using the connection \eqref{tanh} between mixture Gaussian distributions and the Onsager correction, we can now readily prove the TAP equations.
\begin{cor}\label{tap}
If \eqref{high} is satisfied, then for each $1\le i\le N$ we have
\[
\ee\biggl(\avg{\sigma_i} - \tanh\biggl(\frac{\beta}{\sqrt{N}}\sum_{j\ne i} g_{ij}\avg{\sigma_j} + h - \beta^2 (1-q)\avg{\sigma_i}\biggr)\biggr)^2 \le \frac{C(\beta,h)}{\sqrt{N}}.
\]
\end{cor}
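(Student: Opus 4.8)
The plan is to derive the corollary as an immediate specialization of Theorem~\ref{mainthm}, with the only genuine content being the evaluation of an integral against $\nu_i$. First I would take $u(x) = \tanh(\beta x + h)$, which is measurable and bounded with $\|u\|_\infty \le 1$. By the identity \eqref{callen}, $\avg{\sigma_i} = \avg{\tanh(\beta\ell_i + h)} = \avg{u(\ell_i)}$, so Theorem~\ref{mainthm} applied to this $u$ yields at once
\[
\ee\biggl(\avg{\sigma_i} - \int_\rr u(x)\,\nu_i(dx)\biggr)^2 \le \frac{C(\beta,h)}{\sqrt N}.
\]
It therefore remains only to show that $\int_\rr u\,d\nu_i$ equals the $\tanh$ expression in the statement; this is a pointwise-in-$g$ computation, since both $\avg{\sigma_i}$ and $\nu_i$ depend only on the disorder.

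To do this I would apply the mixture-Gaussian identity \eqref{tanh}. The measure $\nu_i$ has density $p_i\,\phi_{\mu_1,1-q} + (1-p_i)\,\phi_{\mu_2,1-q}$ with $\mu_1 = r_i + \beta(1-q)$, $\mu_2 = r_i - \beta(1-q)$ and variance parameter $\sigma^2 = 1-q$; since $\beta(1-q) > 0$ we have $\mu_1 > \mu_2$, so formulas \eqref{ab}--\eqref{tanh} apply. A short computation gives $a = (\mu_1 - \mu_2)/2\sigma^2 = \beta$; moreover $(\mu_1^2 - \mu_2^2)/4\sigma^2 = \beta r_i$, and $\frac{1}{2}\log\frac{p_i}{1-p_i} = \beta r_i + h$ directly from the definition of $p_i$, so that $b = h$. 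Writing $m_i := \int_\rr x\,\nu_i(dx) = p_i\mu_1 + (1-p_i)\mu_2 = r_i + (2p_i-1)\beta(1-q)$, identity \eqref{tanh} then gives $\int_\rr u\,d\nu_i = \tanh\bigl(\beta m_i + h - (2p_i-1)\beta^2(1-q)\bigr)$, in which the quadratic correction cancels the $(2p_i-1)\beta(1-q)$ shift in $m_i$ exactly, leaving $\int_\rr u\,d\nu_i = \tanh(\beta r_i + h)$. Substituting the definition \eqref{ri} of $r_i$ rewrites this as $\tanh\bigl(\frac{\beta}{\sqrt N}\sum_{j\ne i} g_{ij}\avg{\sigma_j} + h - \beta^2(1-q)\avg{\sigma_i}\bigr)$, which combined with the displayed bound finishes the proof.

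I do not expect any real obstacle here: all the analytic work is already packaged in Theorem~\ref{mainthm}. The one step demanding care is the algebraic verification that the parameters defining $\nu_i$ are calibrated so that, in \eqref{tanh}, one simultaneously gets $a = \beta$, $b = h$, and a correction term $-(2p_i-1)\beta^2(1-q)$ that precisely undoes the shift of $m_i$ away from $r_i$. This cancellation is exactly the mechanism by which the Onsager term is produced, so it seems worth checking explicitly rather than invoking \eqref{tanh} as a black box.
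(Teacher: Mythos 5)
Your proof is correct and follows essentially the same route as the paper: apply Theorem~\ref{mainthm} with $u(x)=\tanh(\beta x+h)$, use \eqref{callen} to identify $\avg{\sigma_i}=\avg{u(\ell_i)}$, and then evaluate $\int u\,d\nu_i$ via the mixture-Gaussian identity \eqref{tanh}. The paper leaves the algebraic verification that $a=\beta$, $b=h$, and that the Onsager shift cancels to the reader; you have merely made those steps explicit (all correctly). Worth noting that the paper also offers the shortcut $\nu_i = M(\beta,h,r_i,1-q)$ together with \eqref{tanh2}, which reads off $\int u\,d\nu_i=\tanh(\beta r_i+h)$ in one line without computing $m_i$ or the correction term; but this is only a cosmetic difference.
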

\begin{proof}
By \eqref{callen}, we know that $\avg{\sigma_i} = \avg{\tanh(\beta \ell_i + h)}$. Now, for the mixture Gaussian density $\nu_i$, a simple computation shows that $a = \beta$ and $b = h$, where $a$ and $b$ are defined as in \eqref{ab}. Taking $u(x) = \tanh(\beta x + h)$ in Theorem~\ref{mainthm} and using the property \eqref{tanh} of mixture Gaussian densities, it is not difficult to verify that we get the stated result.
\end{proof}
\noindent Note that Talagrand's version of Corollary \ref{tap} (\cite{talagrand03}, Theorem 2.4.20) has an error bound of order $1/N$, and so our result is suboptimal. However, we are not entirely certain whether Theorem \ref{mainthm} itself is suboptimal (although it probably is), because improving the $1/\sqrt{N}$ bound in the proof seems to require some kind of smoothness for the function $u$, which we are not assuming.

\subsection{An explanation of the mixture Gaussianity}
Since we know the conditional distribution of $\sigma_i$ given $\ell_i$, 
and we know the marginal laws of $\ell_i$ and $\sigma_i$ via Theorem \ref{mainthm} and Corollary \ref{tap}, it is possible to compute the conditional law of $\ell_i$ given $\sigma_i$ by Bayes' rule. It turns out that given $\sigma_i = 1$, the law of $\ell_i$  is approximately Gaussian with mean $r_i + \beta(1-q)$ and variance $1-q$, and given $\sigma_i = -1$, the law of $\ell_i$ is approximately Gaussian with mean $r_i - \beta(1-q)$ and variance $1-q$.  Thus, the marginal distribution of $\ell_i$ under the Gibbs measure is approximately a convex combination of these two distributions.

\subsection{Stein's method}
We prove Theorem \ref{mainthm} using our version of the classical probabilistic tool developed by C. Stein \cite{stein72, stein86}. Incidentally, it is also possible to prove it using standard techniques from the cavity method as developed by Talagrand. However, as we shall see below, one advantage of Stein's method, besides the total variation error bounds, is that it allows us to `discover' the result before proving it. Let us give a brief primer on Stein's method below.

Suppose we want to show that a random variable $X$  has approximately the same distribution as some other random variable $Z$. The basic idea behind Stein's method of distributional approximation~\cite{stein72, stein86} is as follows.
\begin{enumerate}
\item[1.] Identify a ``Stein characterizing operator'' $T$ for $Z$, which has the defining property that for any function $f$ belonging to a fixed large class of functions, $\ee Tf (Z) = 0$. For instance, if $Z$ is a standard Gaussian random variable, then $Tf(x) := f^\prime(x)-xf(x)$ is a characterizing operator, acting on all locally absolutely continuous~$f$.  
\item[2.] Take a function $u$ and find $f$ such that $Tf(x) = u(x) - \ee u(Z)$. Relate the smoothness properties of $f$ to those of $u$. 
\item[3.] By the definition of $f$ it follows that $|\ee u(X) - \ee u(Z)| = |\ee(Tf(X))|$. Compute a bound on $|\ee (Tf(X))|$ by whatever means possible.
\end{enumerate}
The procedure for normal approximation can be simply described as follows: if we want to show that a random variable $X$ is approximately standard Gaussian, Stein's method demands that we show $\ee (f'(X) - Xf(X)) \approx 0$ for every $f$ belonging to a large class of functions.

Although the raw version of the method as stated above may seem like a trivial reduction, the replacement of $u(x)-\ee u(Z)$ by $Tf(x)$ often gives a high degree of maneuverability in practice. While steps 1 and 2 have to be carried out exactly once for every distribution of $Z$, the execution of step~3 depends heavily on the problem at hand. A number of techniques for carrying out this step are available in the literature, e.g.\ exchangeable pairs~\cite{stein86}, diffusion generators~\cite{barbour90}, dependency graphs~\cite{baldirinottstein89, agg90}, size bias couplings~\cite{goldsteinrinott97}, zero bias couplings~\cite{goldsteinreinert97}, couplings for Poisson approximation~\cite{chen75, bhj92}, specialized procedures like~\cite{rinottrotar96, rinottrotar97, fulman04}, and some recent advances~\cite{chatterjee06, chatterjee07, chatterjeefulman06, chatterjeemeckes06}. Incidentally, Stein's method was applied to solve a problem in the interface of statistics and spin glasses in~\cite{chatterjee06a}.

\subsection{Stein's method for mixture Gaussians}\label{steinmix}
For any $a, b, \mu\in \rr$ and $\sigma^2 > 0$, let $M(a,b,\mu, \sigma^2)$ be the probability distribution on $\rr$ with density function
\begin{equation}\label{convenient}
\psi_{a,b,\mu,\sigma^2}(x) = Z_{a,b,\mu,\sigma^2}^{-1}\cosh(ax + b) e^{-(x-\mu)^2/2\sigma^2}.
\end{equation}
where the normalizing constant is given by
\[
Z_{a,b,\mu,\sigma^2} = \sqrt{2\pi} \sigma \cosh(a\mu + b) e^{\frac{1}{2}a^2\sigma^2}.
\]
A simple verification shows that $M(a,b,\mu, \sigma^2)$ is in fact a mixture of two Gaussian distributions:
\begin{equation}\label{mix}
\psi_{a,b,\mu,\sigma^2}(x) = p\phi_{\mu + a\sigma^2, \sigma^2}(x) + (1-p) \phi_{\mu - a\sigma^2, \sigma^2}(x),
\end{equation}
where $\phi$ stands for the Gaussian density function \eqref{normdens} and 
\[
p = \frac{e^{a\mu+b}}{e^{a\mu+b} + e^{-a\mu-b}}.
\]
Thus, the distributions representable as $M(a,b,\mu,\sigma^2)$ are exactly the distributions arising as mixtures of two Gaussian densities. Note that in particular, $M(0,0,\mu,\sigma^2)$ is just the Gaussian distribution with mean $\mu$ and variance $\sigma^2$. An interesting fact about this class of distributions, required for the proof of Corollary \ref{tap}, is that
\begin{equation}\label{tanh2}
\int_{\rr}\tanh(ax + b) \psi_{a,b,\mu,\sigma^2}(x) dx = \tanh(a\mu + b).
\end{equation}
The computation can be easily done using the convenient representation~\eqref{convenient}. Note that this is exactly the relation \eqref{tanh}.
Again, using \eqref{convenient} it is not difficult to verify that the operator
\[
Tf(x) = f'(x) - \biggl(\frac{x-\mu}{\sigma^2} - a\tanh(ax+b)\biggr) f(x)
\]
is a Stein characterizing operator for $M(a,b,\mu,\sigma^2)$. Roughly, this means that to show that a random variable $W$ approximately follows the distribution $M(a,b,\mu,\sigma^2)$, we have to show that for all $f$, 
\[
\ee\biggl(f'(W) - \biggl(\frac{W-\mu}{\sigma^2} - a\tanh(aW+b)\biggr) f(W)\biggr) \approx 0.
\]
To develop Stein's method for this class of distributions, we have to solve
\begin{equation}\label{tf}
Tf(x) = u(x) - \int_{\rr} u(t)\psi_{a,b,\mu,\sigma^2} dt
\end{equation}
for arbitrary $u:\rr\ra\rr$, and relate bounds on $f$ and its derivatives to properties of $u$.

Now note that by \eqref{mix}, the measure $\nu_i$ in Theorem \ref{mainthm} can be alternatively written as $M(\beta,h,r_i,1-q)$. The randomness of $r_i$ adds an extra level of complexity: We also have to analyze the dependence of the function $f$ in~\eqref{tf} on the parameter $\mu$. So we should start with  $f(x,\mu)$ rather than $f(x)$.
The bounds required for Stein's method are summarized in the following lemma. 
\begin{lmm}\label{steinlmm}
Fix $a, b\in \rr$ and $\sigma^2 > 0$, and a bounded  measurable function $u: \rr \ra \rr$. Then there exists an absolutely continuous function $f:\rr^2 \ra \rr$ such that for all $x,\mu\in \rr$,
\begin{align*}
\fpar{f}{x}(x,\mu) - \biggl(&\frac{x-\mu}{\sigma^2} - a\tanh(ax+b)\biggr) f(x, \mu) \\
&= u(x) - \int_{\rr} u(t) \psi_{a,b,\mu,\sigma^2}(t) dt.
\end{align*}
Moreover, we can find a solution $f$ such that for some  constant $C(a,\sigma)$ depending only on $a$ and $\sigma$ we have that $|f|$, $\bigl|\frac{\partial f}{\partial x}\bigr|$, and $\bigl|\frac{\partial f}{\partial \mu}\bigr|$ are all uniformly bounded by $C(a,\sigma) \|u\|_\infty$. 
\end{lmm}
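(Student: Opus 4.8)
The plan is to write down the solution $f$ explicitly by the standard ODE-integrating-factor trick, treating $\mu$ as a parameter, and then extract the bounds from the explicit formula. First I would rewrite the density in the convenient form $\psi_{a,b,\mu,\sigma^2}(x) = Z^{-1}\cosh(ax+b)e^{-(x-\mu)^2/2\sigma^2}$ from \eqref{convenient} and observe that the coefficient $\frac{x-\mu}{\sigma^2} - a\tanh(ax+b)$ appearing in the operator $T$ is precisely $-\frac{d}{dx}\log\psi_{a,b,\mu,\sigma^2}(x)$. Hence the homogeneous equation has solution $\psi_{a,b,\mu,\sigma^2}$ itself, and the inhomogeneous equation $\fpar{f}{x} - (\text{coeff})\,f = u - \int u\,\psi$ is solved by
\[
f(x,\mu) = \frac{1}{\psi_{a,b,\mu,\sigma^2}(x)} \int_{-\infty}^x \Bigl(u(t) - \textstyle\int_{\rr} u(s)\psi_{a,b,\mu,\sigma^2}(s)\,ds\Bigr)\psi_{a,b,\mu,\sigma^2}(t)\,dt.
\]
Because the bracketed integrand has mean zero against $\psi_{a,b,\mu,\sigma^2}$, the integral from $-\infty$ to $x$ equals minus the integral from $x$ to $\infty$, so $f(x,\mu)$ also equals $-\frac{1}{\psi}\int_x^\infty(\cdots)\psi\,dt$; this two-sided representation is what makes $f$ bounded at both ends.

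The bound on $|f|$ itself is then routine: $|u(t) - \int u\,\psi| \le 2\|u\|_\infty$, so $|f(x,\mu)| \le \frac{2\|u\|_\infty}{\psi_{a,b,\mu,\sigma^2}(x)}\min\bigl(\int_{-\infty}^x \psi,\ \int_x^\infty \psi\bigr)$, and one checks that this Mills-ratio-type quantity is bounded uniformly in $x$ and $\mu$ by a constant depending only on $a$ and $\sigma$ (the $\cosh(ax+b)$ factor only helps, since $\cosh \ge 1$, and the Gaussian tail controls the rest; crucially, translating $\mu$ just translates the Gaussian part, and $\cosh(ax+b)$ grows at a fixed exponential rate $a$ independent of $\mu$, so the sup is $\mu$-uniform). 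The bound on $\bigl|\fpar{f}{x}\bigr|$ follows immediately by plugging the bound on $|f|$ back into the defining ODE, since $\fpar{f}{x} = u(x) - \int u\,\psi + \bigl(\frac{x-\mu}{\sigma^2} - a\tanh(ax+b)\bigr)f$ — except that this reintroduces the unbounded factor $\frac{x-\mu}{\sigma^2}$, so I would instead differentiate the explicit formula directly, getting $\fpar{f}{x}(x,\mu) = (u(x) - \int u\,\psi) - \frac{\psi'(x)}{\psi(x)}\cdot\psi(x) f(x,\mu)/\psi(x)$ and bound the second term using the two-sided representation: the factor $\frac{x-\mu}{\sigma^2}$ times the Mills ratio is bounded, which is the usual mechanism in Stein's method for the Gaussian.

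The genuinely new part, and the step I expect to be the main obstacle, is the bound on $\bigl|\fpar{f}{\mu}\bigr|$, since $\mu$ enters $\psi_{a,b,\mu,\sigma^2}$ in a nontrivial way (both in the exponent and, through the normalizing constant $Z_{a,b,\mu,\sigma^2} = \sqrt{2\pi}\,\sigma\cosh(a\mu+b)e^{a^2\sigma^2/2}$, multiplicatively). Differentiating the explicit formula for $f$ in $\mu$ produces several terms: one from $\fpar{}{\mu}$ of the prefactor $1/\psi$, one from $\fpar{}{\mu}$ of $\psi(t)$ inside the integral, and one from $\fpar{}{\mu}$ of the mean $\int u\,\psi$. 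I would compute $\fpar{}{\mu}\log\psi_{a,b,\mu,\sigma^2}(x) = \frac{x-\mu}{\sigma^2} - a\tanh(a\mu+b)$ (note: $a\tanh(a\mu+b)$, not $a\tanh(ax+b)$ — it comes from the normalizing constant), and organize the resulting expression so that each term is a ratio of the form $(\text{bounded}) \times (\text{Mills-type ratio})$ against $\psi$, after again exploiting the mean-zero property to switch between the $(-\infty,x]$ and $[x,\infty)$ representations as needed to keep the linear-in-$x$ and linear-in-$t$ weights under control. The key estimates reduce to: (i) $\int_{\rr}|t-\mu|\,\psi_{a,b,\mu,\sigma^2}(t)\,dt \le C(a,\sigma)$, and (ii) $\frac{1}{\psi(x)}\int_{-\infty}^x |t-\mu|\,\psi(t)\,dt$ (and its $[x,\infty)$ counterpart) is bounded, both of which follow from Gaussian tail bounds with the $\cosh$ factors only improving matters; assembling these with the triangle inequality gives $\bigl|\fpar{f}{\mu}\bigr| \le C(a,\sigma)\|u\|_\infty$. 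Finally, the absolute continuity of $f$ in $(x,\mu)$ is clear from the explicit formula since $\psi$ is smooth and strictly positive and $u$ is only integrated. I would present the argument by first doing the pure-Gaussian case $a=0$ (where $\psi$ is just $\phi_{\mu,\sigma^2}$ and everything is classical) to fix notation, then noting that the $\cosh(ax+b)$ and $\cosh(a\mu+b)$ factors are uniformly bounded below by $1$ and contribute only $a$-dependent exponential growth, which the Gaussian absorbs.
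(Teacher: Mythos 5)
Your proposal follows essentially the same route as the paper's proof: write $f$ explicitly by the integrating-factor trick using the density itself as the integrating factor, exploit the two-sided representation coming from the mean-zero property, bound $|f|$ and $|(x-\mu)f|$ by Mills-ratio estimates, read off $|\fpar{f}{x}|$ from the ODE, and get $|\fpar{f}{\mu}|$ by differentiating the explicit formula and controlling the resulting $\int|t-\mu|\psi$-type terms. One small imprecision to flag: the $\cosh$ factor does not simply ``help because $\cosh\ge 1$''---the actual mechanism (which the paper carries out) is to split $\cosh(at+b)$ into $\tfrac12 e^{at+b}+\tfrac12 e^{-at-b}$, complete the square to shift the Gaussian by $\pm a\sigma^2$, and then invoke a Gaussian Mills-ratio bound that is uniform over shifted arguments $z\ge -|a|\sigma$, so the boundedness is a real computation rather than an automatic consequence of $\cosh\ge 1$.
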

\noindent Note that the case $a = b = 0$ covers the case of the pure Gaussian distributions. The proof of this lemma, which is quite elementary but tedious, is relegated to the end of the manuscript.

\subsection{How to apply Stein's method}
To see how Stein's method can be used in the SK model, let us sketch a very simple example: the unconditional (i.e.\ average over the disorder) distribution of the local field at site $1$ when $\beta < 1$ and $h = 0$. This is a special case of Theorem \ref{mainthm}. Since $R_{1,2}$ concentrates around zero in this regime (see e.g.\ \cite{alr87}, or \cite{talagrand03}, Chapter 2), we have $q = q(\beta,0) = 0$. Also, by symmetry, $\smallavg{\sigma_i}\equiv 0$ for each $i$. Therefore the measure $\nu_i$ is actually a nonrandom probability measure, namely, the mixture Gaussian density $\frac{1}{2}\phi_{\beta,1} + \frac{1}{2}\phi_{-\beta,1}$. Clearly, the nonrandomness of the limiting distribution hugely simplifies our goal. Let us now see how we can prove via Stein's method that this is the limiting distribution of the local fields.  

Recall that
$\ell_1 = \frac{1}{\sqrt{N}} \sum_{j=2}^N g_{1j}\sigma_j$.
Fix a smooth function $f$. For each $j =2,\ldots,N$, let
\[
h_j = \frac{1}{\sqrt{N}}\avg{\sigma_j f(\ell_1)}.
\]
Then we have 
\begin{equation}\label{ske1}
\sum_{j=2}^N g_{1j} h_j = \avg{\ell_1 f(\ell_1)}.
\end{equation}
On the other hand, an easy computation gives
\[
\fpar{h_j}{g_{1j}} = \frac{\avg{f'(\ell_1)} +\beta \avg{\sigma_1 f(\ell_1) } -\beta \avg{\sigma_j f(\ell_1)} \avg{\sigma_1\sigma_j}}{N}.
\]
Note that $\ell_1$ does not depend on $\sigma_1$, and the conditional expectation of $\sigma_1$ given $\sigma_2,\ldots,\sigma_N$ is $\tanh(\beta \ell_1)$. Thus,
\[
\avg{\sigma_1f(\ell_1)} = \avg{\tanh(\beta \ell_1) f(\ell_1)}.
\]
Again, it follows from the high temperature condition \eqref{high} for $\beta < 1$ and $h = 0$ that for $2\le j\le N$,
\[
\avg{\sigma_1\sigma_j} \approx \avg{\sigma_1}\avg{\sigma_j} = 0.
\]
Combining, we see that
\begin{equation}\label{ske2}
\sum_{j=2}^N \fpar{h_j}{g_{1j}} \approx \avg{f'(\ell_1)} + \beta \avg{\tanh(\beta \ell_1) f(\ell_1)}.
\end{equation}
Now, using integration by parts for Gaussian random variables, we get 
\[
\ee\biggl(\sum_{j=2}^N g_{1j}h_j\biggr) =\ee\biggl(\sum_{j=2}^N \fpar{h_j}{g_{1j}}\biggr).
\]
In view of \eqref{ske1} and \eqref{ske2}, this is equivalent to
\begin{equation}\label{annealed}
\ee\avg{\ell_1 f(\ell_1) - f'(\ell_1)-\beta \tanh(\beta \ell_1) f(\ell_1)} \approx 0.
\end{equation}
As noted in Subsection \ref{steinmix},
\[
T f(x) = xf(x)-f'(x)-\beta \tanh(\beta x) f(x)
\]
is a Stein characterizing operator for the mixture Gaussian density
$\frac{1}{2}\phi_{\beta, 1} + \frac{1}{2}\phi_{-\beta,1}$.
Note that this procedure `discovers' that the (averaged) limiting distribution of $\ell_1$ is the above Gaussian mixture.

\subsection{Quenched distributions and the Approximation Lemma}
In the above example, we sketched a derivation of the limiting unconditional (i.e.\ averaged over disorder) distribution for the local field at site $1$, essentially using Gaussian integration by parts. To prove the result for the quenched distribution, it does not suffice to show~\eqref{annealed}, but rather, we have to show 
\[
\avg{\ell_1 f(\ell_1) - f'(\ell_1)-\beta \tanh(\beta \ell_1) f(\ell_1)} \approx 0 \ \ \ \text{with high probability.}
\]
In other words, we have to show
\[
\sum_{j=2}^N g_{1j}h_j \approx \sum_{j=2}^N \fpar{h_j}{g_{1j}} \ \ \ \text{with high probability.}
\]
This is a recurring issue whenever we have to prove a quenched CLT. The following result, which we call the `approximation lemma', becomes our main tool. The proof of the lemma is so short that we present it right away.
\begin{lmm}\label{mainlmm}
Suppose $g = (g_1,\ldots,g_n)$ is a collection of  independent standard Gaussian random variables, and $h_1,\ldots,h_n$ are absolutely continuous functions of $g$. Assume that $h_i$ are elements of the Sobolev space $H^{1,2}$ with respect to the Gaussian measure on $\rr$.  Then
\begin{align*}
\ee\biggl(\sum_{j=1}^ng_j h_j - \sum_{j=1}^n \fpar{h_j}{g_j}\biggr)^2 = \sum_{j=1}^n\ee h_j^2 + \sum_{j,k=1}^n \ee\biggl(\fpar{h_j}{g_k} \fpar{h_k}{g_j}\biggr).
\end{align*}
\end{lmm}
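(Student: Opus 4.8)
The engine of the proof is Gaussian integration by parts: for a standard Gaussian vector $g=(g_1,\dots,g_n)$ and a sufficiently regular $F$, one has $\ee(g_j F)=\ee(\partial F/\partial g_j)$. I would use it exactly twice. (The identity to be proved is the $L^2$ isometry for the divergence/Skorokhod operator $h\mapsto\sum_j(g_jh_j-\partial h_j/\partial g_j)$ from Malliavin calculus, but the finite-dimensional Gaussian case needs nothing beyond integration by parts.) Write $\Delta := \sum_{j=1}^n g_j h_j - \sum_{j=1}^n \partial h_j/\partial g_j$, so that the assertion is a formula for $\ee(\Delta^2)$.

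The first step is to record the ``duality'' identity
\[
\ee(\Delta\, G) = \sum_{j=1}^n \ee\Bigl(h_j\, \fpar{G}{g_j}\Bigr),
\]
valid for any sufficiently regular $G$. Indeed, integrating by parts in $g_j$ gives $\ee(g_j h_j G)=\ee\bigl(\fpar{(h_j G)}{g_j}\bigr)=\ee\bigl((\fpar{h_j}{g_j}) G\bigr)+\ee\bigl(h_j\,\fpar{G}{g_j}\bigr)$, and on summing over $j$ the terms $\ee((\partial h_j/\partial g_j)G)$ cancel against the contribution $-\sum_j\ee((\partial h_j/\partial g_j)G)$ coming from the second sum defining $\Delta$. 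Applying this with the choice $G=\Delta$ reduces the problem to computing $\ee(\Delta^2)=\sum_j\ee\bigl(h_j\,\fpar{\Delta}{g_j}\bigr)$.

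The second step is to differentiate $\Delta$: one finds $\fpar{\Delta}{g_j} = h_j + \bigl(\sum_k g_k\,\fpar{h_k}{g_j} - \sum_k \mpar{h_k}{g_j}{g_k}\bigr)$, and — using the symmetry of mixed partials — the parenthesized expression is again of the form $\Delta$, built from the vector $(\partial h_k/\partial g_j)_k$ in place of $(h_k)_k$. Substituting back, the term $h_j$ produces $\sum_j\ee h_j^2$, while applying the duality identity a second time to the parenthesized piece turns it into $\sum_k\ee\bigl(\frac{\partial h_k}{\partial g_j}\frac{\partial h_j}{\partial g_k}\bigr)$; the point is that the second-derivative terms that would otherwise survive are precisely the ones the duality absorbs. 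Summing over $j$ gives the stated right-hand side.

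The only real obstacle is rigor under the sole hypothesis $h_j\in H^{1,2}$: the computation above formally manipulates second derivatives of the $h_j$ and products such as $\Delta\, h_j$ whose integrability is not a priori clear, and it invokes equality of mixed partials. I would dispatch this in the usual way — first prove the identity for $h_j$ smooth with derivatives of at most polynomial growth, a class dense in $H^{1,2}$ for the Gaussian measure, where every manipulation above is elementary and justified; then pass to the limit, using that the right-hand side is a bounded quadratic/bilinear form in the $H^{1,2}$ data and that the identity itself, applied to the smooth approximants, shows $h\mapsto\Delta$ is bounded from $H^{1,2}$ into $L^2$ along the approximating sequence, so that both sides are continuous in the limit. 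Modulo this approximation, the proof is a short and routine calculation.
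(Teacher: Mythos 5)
Your proposal is correct and takes essentially the same route as the paper: two Gaussian integrations by parts, first to reduce $\ee(\Delta^2)$ to $\sum_j\ee(h_j\,\partial\Delta/\partial g_j)$, then to handle the $g_k\,\partial h_k/\partial g_j$ term, with the mixed second derivatives cancelling. Your framing of both steps as instances of a single ``duality'' identity, and the observation that $\partial\Delta/\partial g_j - h_j$ is itself a divergence, is just a cleaner packaging of the identical computation, including the same smoothing-and-density reduction to handle the bare $H^{1,2}$ hypothesis.
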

\begin{proof}
By taking convolutions with smooth kernels, we can assume that $h_1,\ldots,h_n$ are twice continuously differentiable. Let
\[
h = \sum_{j=1}^n \biggl(g_j h_j - \fpar{h_j}{g_j}\biggr).
\]
Then
\[
\ee h^2 = \sum_{j=1}^n\ee\biggl(g_j h_j h -  \fpar{h_j}{g_j}h\biggr).
\]
Integration-by-parts gives 
\[
\ee(g_j h_j h) = \ee \biggl( \fpar{h_j}{g_j} h + h_j\fpar{h}{g_j}\biggr).
\]
Thus, 
\[
\ee h^2 = \sum_{j=1}^n \ee\biggl(h_j \fpar{h}{g_j}\biggr).
\]
Now 
\begin{align*}
\fpar{h}{g_j} &= h_j + \sum_{k=1}^n \biggl(g_k \fpar{h_k}{g_j} - \mpar{h_k}{g_j}{g_k}\biggr).
\end{align*}
Therefore,
\[
\ee h^2 = \sum_{j=1}^n \ee h_j^2 + \sum_{j,k=1}^n \ee\biggl(g_k h_j \fpar{h_k}{g_j} - h_j \mpar{h_k}{g_j}{g_k}\biggr).
\]
Again, using integration-by-parts, we see that
\[
\ee\biggl(g_k h_j \fpar{h_k}{g_j}\biggr) = \ee\biggl(\fpar{h_j}{g_k} \fpar{h_k}{g_j}+ h_j \mpar{h_j}{g_j}{g_k}\biggr).
\]
This completes the proof.
\end{proof}

\subsection{Other results}
The following theorems are some further examples of CLTs for the SK model that can be proved via Stein's method. In all cases, we obtain total variation error bounds. Although the bounds are  probably suboptimal, this is the only method available that can give such bounds.
\subsubsection{The cavity field}
Suppose $g_1,\ldots,g_N$ are i.i.d.\ standard Gaussian random variables, independent of the disorder $(g_{ij})_{i<j\le N}$. The `cavity field' $\ell$ is defined as
\begin{equation}\label{cavitydef}
\ell = \frac{1}{\sqrt{N}}\sum_{i=1}^N g_i \sigma_i.
\end{equation}
The name `cavity field' comes from the role played by $\ell$ in the cavity method for solving the SK model in the high temperature regime.
Note that the quenched average of $\ell$ is 
\[
\avg{\ell} = \frac{1}{\sqrt{N}}\sum_{i=1}^N g_i \avg{\sigma_i}.
\]
The following result states that under the Gibbs measure, $\ell$ is approximately Gaussian with mean $\avg{\ell}$ and variance $1-q$. The original proof of this result, without the error bound, can be found in Talagrand \cite{talagrand03}, page 87.
\begin{thm}\label{cavity}
Suppose the high temperature condition \eqref{high} is satisfied and $u: \rr \ra \rr$ is a bounded measurable function. Then
\begin{align*}
\ee\biggl(\avg{u(\ell)} - \int_{\rr} u(t) \phi_{\langle\ell\rangle,1-q}(t) dt\biggr)^2 &\le \frac{C(\beta,h)\|u\|_\infty^2}{\sqrt{N}},
\end{align*}
where $\phi$ is the Gaussian density defined in \eqref{normdens}.
\end{thm}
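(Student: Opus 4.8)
\textbf{Proof proposal for Theorem~\ref{cavity}.}

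The plan is to run exactly the three-step Stein's method scheme outlined in the introduction, with $Z$ a Gaussian of mean $\langle\ell\rangle$ and variance $1-q$, i.e.\ $M(0,0,\langle\ell\rangle, 1-q)$ in the notation of Subsection~\ref{steinmix}, and to carry out Step~3 using the Approximation Lemma (Lemma~\ref{mainlmm}). Since $\langle\ell\rangle$ is itself random (it depends on the disorder $g$ and through it on $\langle\sigma_i\rangle$), I should invoke Lemma~\ref{steinlmm} with $a=b=0$ and $\mu = \langle\ell\rangle$ to obtain a solution $f(x,\mu)$ of $\frac{\partial f}{\partial x}(x,\mu) - \frac{x-\mu}{1-q} f(x,\mu) = u(x) - \int u(t)\phi_{\mu,1-q}(t)\,dt$, with $|f|$, $|\partial f/\partial x|$, $|\partial f/\partial\mu|$ all bounded by $C(\beta)\|u\|_\infty$. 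The quantity to control is then
\[
\ee\biggl(\avg{u(\ell)} - \int_\rr u(t)\phi_{\langle\ell\rangle,1-q}(t)\,dt\biggr)^2 = \ee\Bigl(\Bigl\langle \fpar{f}{x}(\ell,\langle\ell\rangle)\Bigr\rangle - \Bigl\langle \tfrac{\ell - \langle\ell\rangle}{1-q}\, f(\ell,\langle\ell\rangle)\Bigr\rangle\Bigr)^2,
\]
so it suffices to show that $\ee\langle \frac{\partial f}{\partial x}(\ell,\langle\ell\rangle) - \frac{\ell-\langle\ell\rangle}{1-q} f(\ell,\langle\ell\rangle)\rangle^2 \le C(\beta,h)\|u\|_\infty^2/\sqrt N$.

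Next I would compute $\langle \ell\, f(\ell,\langle\ell\rangle)\rangle$ by Gaussian integration by parts in the \emph{external} variables $g_1,\dots,g_N$ (the ones defining $\ell$ in \eqref{cavitydef}), which are independent of the disorder. Setting $h_j := \frac{1}{\sqrt N}\langle \sigma_j f(\ell,\langle\ell\rangle)\rangle$, we have $\sum_{j=1}^N g_j h_j = \langle \ell f(\ell,\langle\ell\rangle)\rangle$, and differentiating $h_j$ in $g_k$ produces, via the product rule on the Gibbs bracket, terms of the shape $\frac{1}{N}\langle \frac{\partial f}{\partial x}\rangle$ (when $j=k$, from differentiating $\ell$ inside $f$) plus replica-overlap terms $\pm\frac{\beta}{N}\langle\sigma_j\sigma_k f\rangle$, $\pm\frac{\beta}{N}\langle\sigma_j f\rangle\langle\sigma_k\rangle$, etc., coming from differentiating the Gibbs weight and from the dependence of $\langle\sigma_i\rangle$ on $g$. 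Summing over $j$ (and $j,k$), the diagonal $j=k$ contributions assemble into $\langle \frac{\partial f}{\partial x}\rangle$ plus a term $\beta^2 \langle \sigma_k\rangle$-weighted piece that, after using $\frac{1}{N}\sum_k\langle\sigma_k\rangle^2 \approx q$ (a consequence of \eqref{high}), reproduces exactly $\frac{\langle(\ell-\langle\ell\rangle) f\rangle}{1-q}$ — here $1-q$ appears because $\frac{1}{N}\sum g_k^2\to 1$ and subtracting off the $\langle\ell\rangle$ mean removes the part carried by $\langle\sigma_k\rangle$. The off-diagonal overlap terms are the error. The Approximation Lemma gives the clean identity $\ee(\sum g_j h_j - \sum \partial h_j/\partial g_j)^2 = \sum \ee h_j^2 + \sum_{j,k}\ee(\frac{\partial h_j}{\partial g_k}\frac{\partial h_k}{\partial g_j})$; since $\|f\|_\infty\le C\|u\|_\infty$, each $\ee h_j^2 \le C\|u\|_\infty^2/N$ so $\sum_j \ee h_j^2 \le C\|u\|_\infty^2$, and the cross-term is where the $1/\sqrt N$ gain must come from: $\frac{\partial h_j}{\partial g_k}$ contains the factor $\frac{1}{N}$ together with overlap quantities like $\langle\sigma_j\sigma_k\rangle - \langle\sigma_j\rangle\langle\sigma_k\rangle$ and $R_{12}$-type averages, and one bounds $\sum_{j,k}\ee(\cdots)$ by Cauchy--Schwarz against $\ee\langle(R_{12}-q)^4\rangle \le C/N^2$ from \eqref{high}. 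Finally I would assemble: the difference between $\langle \ell f - \frac{\partial f}{\partial x} - \frac{(\ell-\langle\ell\rangle)}{1-q}f\rangle$ and $\sum g_j h_j - \sum \partial h_j/\partial g_j$ consists only of these overlap-error terms, each of which is $L^2$-small of order $N^{-1/4}$, whence the claimed bound.

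The main obstacle I expect is the careful bookkeeping in the second step: differentiating $h_j$ in $g_k$ correctly, since $\langle\sigma_i\rangle$ (and hence $\langle\ell\rangle$, which enters $f$ through its second argument) is itself a function of the external randomness only through the replicas but is \emph{constant} in $g_1,\dots,g_N$ — so in fact $\partial\langle\ell\rangle/\partial g_k$ does \emph{not} vanish, it equals $\frac{1}{\sqrt N}\langle\sigma_k\rangle$, and tracking the resulting $\frac{\partial f}{\partial\mu}$ terms (bounded by $C\|u\|_\infty$ via Lemma~\ref{steinlmm}, but again gaining a $\frac1N$ from the prefactor and needing an overlap bound for the sum) is the delicate part. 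The other genuinely nontrivial point is verifying that the diagonal terms really do reconstruct the coefficient $\frac{1}{1-q}$ rather than $1$, which forces one to split $\ell$ as $(\ell-\langle\ell\rangle) + \langle\ell\rangle$ and use $\frac1N\sum_k\langle\sigma_k\rangle^2\approx q$ — itself needing \eqref{high} — at the right moment. Everything else is Cauchy--Schwarz against the fourth-moment overlap bound and the uniform bounds on $f$ from Lemma~\ref{steinlmm}.
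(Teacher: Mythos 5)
There is a genuine gap, and it lies precisely where you say you expect "delicate bookkeeping." You set $h_j = \frac{1}{\sqrt N}\langle \sigma_j f(\ell,\langle\ell\rangle)\rangle$, and you correctly observe that then $\sum_j \ee h_j^2 \le C\|u\|_\infty^2$ (an $O(1)$ quantity). But Lemma~\ref{mainlmm} is an exact identity: $\ee(\sum g_j h_j - \sum \partial h_j/\partial g_j)^2 = \sum \ee h_j^2 + \sum_{j,k}\ee(\partial_k h_j\,\partial_j h_k)$. With your $h_j$, the first sum on the right is already $O(1)$, so the lemma gives an $O(1)$ error, not the $O(N^{-1/2})$ you need — the cross-term cannot save you because it is a separate positive contribution in an equality, not a cancelling one. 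The fix, which is the heart of the paper's proof, is to center: take $h_j = \frac{1}{\sqrt N}\langle \xxi_j f\rangle$ with $\xxi_j = \sigma_j - \langle\sigma_j\rangle$. Then $\sum g_j h_j = \langle(\ell-\langle\ell\rangle)f\rangle$ directly, and $\ee h_j^2 = \frac{1}{N}\ee\langle\xxi_j f\rangle^2$ is controlled by the overlap concentration \eqref{high} via estimate \eqref{est1}, giving $\sum_j\ee h_j^2 \le C/\sqrt N$. Without this centering the approximation lemma has no teeth.

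Two smaller points. First, the Gibbs weight $G_N$ is a function of the disorder $(g_{il})_{i<l}$ only, so it is constant in the cavity Gaussians $g_1,\ldots,g_N$; likewise $\langle\sigma_i\rangle$ is constant in $g_k$. Hence $\partial h_j/\partial g_k$ produces no "$\pm\frac{\beta}{N}\langle\sigma_j\sigma_k f\rangle$" terms from differentiating the Gibbs weight — those only appear when one differentiates in the disorder itself (as in the proof of Theorem~\ref{mainthm}). Here the derivative acts only through the two arguments of $f$, yielding $\partial h_j/\partial g_k = \frac{1}{N}\langle\xxi_j\sigma_k f_1\rangle + \frac{1}{N}\langle\xxi_j f_2\rangle\langle\sigma_k\rangle$. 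Second, with the centered $h_j$ the diagonal sum $\sum_j\partial h_j/\partial g_j$ approximates $(1-q)\langle f_1\rangle$ cleanly via estimate \eqref{est2} (the $f_2$ piece being small by \eqref{est31}); you do not need the ad hoc decomposition $\ell = (\ell - \langle\ell\rangle)+\langle\ell\rangle$ and $\frac1N\sum\langle\sigma_k\rangle^2\approx q$ that you propose, and with the uncentered $h_j$ that route does not actually reconstitute the Stein operator either — one gets $\langle\ell f\rangle \approx \langle f_1\rangle + q\langle f_2\rangle$, which is not the characterizing relation.
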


\subsubsection{The Hamiltonian}
Our next limit theorem is about the quantity
\begin{equation}\label{hamildef}
H := \frac{1}{N}\sum_{i<j\le N} g_{ij}\sigma_i\sigma_j - \frac{\sqrt{N}\beta}{2}.
\end{equation}
Note that this is just a linear transformation of the interaction term in the hamiltonian. We show that in the regime $\beta < 1$, $h=0$, the quenched law of $H$ is asymptotically Gaussian with mean $0$ and variance $1/2$. The case of general $\beta$ and $h$, even in the high temperature phase, seems to be much harder, and is currently under investigation.
\begin{thm}\label{hamil}
Let $H$ be defined as above. Suppose $\beta < 1$, $h=0$, and $u:\rr \ra \rr$ is a bounded measurable function. Then
\[
\ee\biggl(\avg{u(H)} - \int_{\rr} u(t)\phi_{0,1/2} (t) dt\biggr)^2 \le \frac{C(\beta)\|u\|_\infty^2}{N}.
\]
This gives a total variation error bound of order $1/\sqrt{N}$ in the central limit theorem for $H$.
\end{thm}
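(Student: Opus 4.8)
The plan is to run Stein's method for the $N(0,1/2)$ distribution, whose characterizing operator is (proportional to) the one in Subsection~\ref{steinmix} with $a=b=\mu=0$ and $\sigma^2=1/2$, namely $Tf(x)=\tfrac12 f'(x)-xf(x)$. By Lemma~\ref{steinlmm} (with $a=b=0$, $\mu=0$, $\sigma^2=1/2$), for every bounded measurable $u$ there is an absolutely continuous $f$ with $\tfrac12 f'(x)-xf(x)=-\tfrac12\bigl(u(x)-\int_\rr u(t)\phi_{0,1/2}(t)\,dt\bigr)$ and $\|f\|_\infty,\|f'\|_\infty\le C\|u\|_\infty$. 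So it suffices to show that $\avg{\tfrac12 f'(H)-Hf(H)}$ is small in $L^2$, uniformly over these solutions $f$; after mollifying $f$ we may assume it is smooth. As in the calculation for the local field $\ell_1$ sketched above, the mechanism is Gaussian integration by parts in the disorder $(g_{ij})$, and the \emph{quenched} $L^2$ bound will be extracted from the Approximation Lemma (Lemma~\ref{mainlmm}).

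Concretely, for each $1\le i<j\le N$ set $h_{ij}:=\tfrac1N\avg{\sigma_i\sigma_j f(H)}$. Since $\tfrac1N\sum_{i<j}g_{ij}\sigma_i\sigma_j = H+\tfrac{\sqrt N\beta}{2}$, summing gives $\sum_{i<j}g_{ij}h_{ij}=\avg{Hf(H)}+\tfrac{\sqrt N\beta}{2}\avg{f(H)}$. On the other hand, using $\fpar{H}{g_{ij}}=\tfrac1N\sigma_i\sigma_j$, the identity $\fpar{}{g_{ij}}\avg{A}=\tfrac{\beta}{\sqrt N}\bigl(\avg{A\sigma_i\sigma_j}-\avg{A}\avg{\sigma_i\sigma_j}\bigr)$, and $\sigma_i^2=1$, a short computation gives
\[
\fpar{h_{ij}}{g_{ij}}=\frac1{N^2}\avg{f'(H)}+\frac{\beta}{N^{3/2}}\avg{f(H)}-\frac{\beta}{N^{3/2}}\avg{\sigma_i\sigma_j f(H)}\avg{\sigma_i\sigma_j}.
\]
The crucial algebraic step is to rewrite the last, Onsager-type piece using two independent replicas $\sigma^1,\sigma^2$: since $\sum_{i<j}\sigma_i^1\sigma_i^2\sigma_j^1\sigma_j^2=\tfrac12\bigl((\sum_i\sigma_i^1\sigma_i^2)^2-N\bigr)=\tfrac12(N^2R_{12}^2-N)$, one gets $\sum_{i<j}\avg{\sigma_i\sigma_j f(H)}\avg{\sigma_i\sigma_j}=\tfrac12\avg{f(H)(N^2R_{12}^2-N)}$. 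Summing over the $\binom N2$ pairs (which is where the variance $1/2$ comes from, via $\binom N2/N^2\to\tfrac12$) and subtracting, the $\tfrac{\sqrt N\beta}{2}$ terms cancel and one arrives at the exact identity
\[
\sum_{i<j}g_{ij}h_{ij}-\sum_{i<j}\fpar{h_{ij}}{g_{ij}}=\avg{Hf(H)-\tfrac12 f'(H)}+\frac1{2N}\avg{f'(H)}+\frac{\beta\sqrt N}{2}\avg{f(H)R_{12}^2},
\]
which, together with the Stein equation, expresses $\avg{u(H)}-\int_\rr u(t)\phi_{0,1/2}(t)\,dt$ in terms of $\sum_{i<j}g_{ij}h_{ij}-\sum_{i<j}\fpar{h_{ij}}{g_{ij}}$ plus the two explicit error terms on the right.

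It then remains to bound each piece in $L^2$. The term $\tfrac1{2N}\avg{f'(H)}$ contributes $O(\|u\|_\infty^2/N^2)$; the term $\tfrac{\beta\sqrt N}{2}\avg{f(H)R_{12}^2}$ contributes at most $C\beta^2\|u\|_\infty^2\,N\,\ee\avg{R_{12}^2}^2\le C\beta^2\|u\|_\infty^2\,N\,\ee\avg{R_{12}^4}$, which is $O(\|u\|_\infty^2/N)$ precisely because $q=0$ for $\beta<1$, $h=0$, so that \eqref{high} reads $\ee\avg{R_{12}^4}\le C/N^2$. For the main term, apply Lemma~\ref{mainlmm} with index set $\{(i,j):i<j\}$, which gives $\ee(\cdots)^2=\sum_{i<j}\ee h_{ij}^2+\sum_{(i,j),(k,l)}\ee\bigl(\fpar{h_{ij}}{g_{kl}}\fpar{h_{kl}}{g_{ij}}\bigr)$. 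Both sums are handled by the same ``replica trick'': $\sum_{i,j}\ee h_{ij}^2=N^{-2}\,\ee\sum_{i,j}\avg{\sigma_i\sigma_j f(H)}^2=\ee\avg{f(H^1)f(H^2)R_{12}^2}\le\|f\|_\infty^2\,\ee\avg{R_{12}^2}=O(\|u\|_\infty^2/N)$; and, expanding $\fpar{h_{ij}}{g_{kl}}$ into its three pieces (of sizes $N^{-2},N^{-3/2},N^{-3/2}$) and using Cauchy--Schwarz on the nine products, the double sum is bounded by combinations of $N^{-3}\,\ee\sum_{i,j,k,l}\avg{\sigma_i\sigma_j\sigma_k\sigma_l f(H)}^2=N\,\ee\avg{f(H^1)f(H^2)R_{12}^4}$ and lower-order analogues, each $O(\|u\|_\infty^2/N)$ by \eqref{high} at the fourth power. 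Collecting the pieces yields $\ee\bigl(\avg{u(H)}-\int_\rr u(t)\phi_{0,1/2}(t)\,dt\bigr)^2\le C(\beta)\|u\|_\infty^2/N$, and the total-variation bound of order $1/\sqrt N$ follows by taking $u$ to be an indicator.

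I expect the main obstacle to be the bookkeeping in the Approximation-Lemma double sum: there are $\sim N^4/4$ pairs $((i,j),(k,l))$ while each $\fpar{h_{ij}}{g_{kl}}$ is only $O(N^{-3/2})$, so the naive estimate is $O(N)$, which is useless. The two extra powers of $N$ are recovered exactly by collapsing the spin index sums into moments of the overlap and invoking \eqref{high} at the fourth moment; this is also precisely the step that forces the hypothesis $q=0$, i.e.\ $\beta<1$, $h=0$, since for $q\neq0$ the quantities $\ee\avg{R_{12}^2}$ and $\ee\avg{R_{12}^4}$ are $\Theta(1)$ and nothing on the right-hand side of the identity is small. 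A secondary technicality is the usual regularization: Lemma~\ref{steinlmm} delivers only an absolutely continuous $f$ with bounded $f$ and $f'$, so one first convolves $f$ (hence each $h_{ij}$) with a smooth kernel to apply Lemma~\ref{mainlmm}, and then removes the mollification at the end.
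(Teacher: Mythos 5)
Your proposal is correct and follows essentially the same route as the paper's proof: the same choice of $h_{ij}=\tfrac1N\avg{\sigma_i\sigma_j f(H)}$, the same Stein equation from Lemma \ref{steinlmm} with $a=b=\mu=0$ and $\sigma^2=1/2$, the same application of Lemma \ref{mainlmm}, and the same reduction of the spin-index sums to overlap moments controlled by \eqref{high}. The only difference is cosmetic: you write the remainder $\mathcal{R} = -\tfrac1{2N}\avg{f'(H)}-\tfrac{\beta\sqrt N}{2}\avg{f(H)R_{12}^2}$ explicitly, whereas the paper simply asserts $\ee(\mathcal{R}^2)\le C(\beta)/N$; and you use the $\tfrac12 f'-xf$ normalization of the Stein operator with a compensating factor, whereas the paper uses $f'-2xf$, which changes nothing substantive.
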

\noindent Again, this was originally proved in  Comets and Neveu \cite{cometsneveu95}, Proposition 5.2, albeit without an error bound.

\subsubsection{Quenched average of the spin at a site}
It is natural to ask about the limiting distribution of the random variables $(\langle \sigma_i\rangle)_{1\le i\le N}$. Although the joint distribution has no simple description, Talagrand proved that for any fixed $k$, the collection $(\langle\sigma_i \rangle)_{1\le i\le k}$ converges in law to  $(\tanh(\beta z_i\sqrt{q} + h))_{1\le i\le k}$, where $z_1,\ldots,z_k$ are independent standard Gaussian random variables (Theorem~2.4.12 in \cite{talagrand03}). 

Note that the term inside $\tanh$ in Corollary \ref{tap} is simply $\beta r_i + h$, with $r_i$ defined in \eqref{ri}. Hence, to compute the asymptotic distribution of $\langle \sigma_i \rangle$, it suffices to find out the limit law of $r_i$. The following result shows that $r_i$ is asymptotically Gaussian with mean $0$ and variance $q$, and gives a total variation error bound. 
\begin{thm}\label{newthm}
Suppose \eqref{high} holds for some $q>0$, and $r_i$ is defined as in~\eqref{ri}. Let $z$ be a standard Gaussian random variable. Then for any bounded measurable $u:\rr\ra\rr$, 
\[
\bigl|\ee u(r_i) - \ee u(z\sqrt{q})\bigr|\le \frac{C(\beta,h)\|u\|_\infty }{N^{1/4}}.
\]
Note that by Corollary \ref{tap}, this shows that $\langle \sigma_i\rangle$ is asymptotically distributed as $\tanh(\beta z\sqrt{q} + h)$. 
\end{thm}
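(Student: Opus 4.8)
The plan is to run Stein's method for the centered Gaussian law $N(0,q)$, whose characterizing operator is the $a=b=\mu=0$, $\sigma^2=q$ instance of the operator in Lemma~\ref{steinlmm}, namely $Tf(x)=qf'(x)-xf(x)$. Given a bounded measurable $u$, Lemma~\ref{steinlmm} supplies an $f$ with
\[
qf'(x)-xf(x)=u(x)-\ee u(z\sqrt q),\qquad \|f\|_\infty,\ \|f'\|_\infty\le C(\beta,h)\|u\|_\infty ,
\]
so that $\ee u(r_i)-\ee u(z\sqrt q)=\ee\bigl(qf'(r_i)-r_if(r_i)\bigr)$ and it remains to bound the right-hand side. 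Since the later estimates will also want a bound on $f''$, which Lemma~\ref{steinlmm} does not provide for merely bounded $u$, I would first replace $u$ by its mollification $u_\delta$ at a scale $\delta$ to be optimized: this changes $\ee u(z\sqrt q)$ by $O(\delta\|u\|_\infty)$ since $z\sqrt q$ has a bounded density, and changes $\ee u(r_i)$ by a comparable amount (handled by a short bootstrap on the total variation distance we are trying to estimate), while now $\|f''\|_\infty\le C(\beta,h)\,\delta^{-1}\|u\|_\infty$.

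Next I would expand $r_if(r_i)$ and integrate by parts in the disorder. Fixing $i$ and writing $r_i=\frac1{\sqrt N}\sum_{j\ne i}g_{ij}\avg{\sigma_j}-\beta(1-q)\avg{\sigma_i}$, Gaussian integration by parts applied to each $g_{ij}$ inside $\ee\bigl(g_{ij}\avg{\sigma_j}f(r_i)\bigr)$ produces two families of terms. The first uses the elementary identity $\fpar{\avg{\sigma_j}}{g_{ij}}=\frac{\beta}{\sqrt N}\bigl(\avg{\sigma_i}-\avg{\sigma_j}\avg{\sigma_i\sigma_j}\bigr)$; summing over $j$ and inserting the concentration $\frac1N\sum_j\avg{\sigma_j}^2=\avg{R_{12}}\approx q$ together with the approximate factorization $\avg{\sigma_i\sigma_j}\approx\avg{\sigma_i}\avg{\sigma_j}$ — both consequences of \eqref{high} — this family collapses to $\approx\beta(1-q)\ee\bigl(\avg{\sigma_i}f(r_i)\bigr)$, which is exactly cancelled by the Onsager subtraction built into $r_i$. (This is the same cancellation that makes the TAP equations work.) In the second family, $\frac1{\sqrt N}\sum_j\ee\bigl(\avg{\sigma_j}f'(r_i)\,\fpar{r_i}{g_{ij}}\bigr)$, the leading piece $\frac1{\sqrt N}\avg{\sigma_j}$ of $\fpar{r_i}{g_{ij}}$ produces $\frac1N\sum_j\avg{\sigma_j}^2\,\ee f'(r_i)\approx q\,\ee f'(r_i)$, matching the $qf'$ in the Stein operator.

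What is left are the ``reaction'' terms, generated by the derivatives $\fpar{\avg{\sigma_k}}{g_{ij}}$ and $\fpar{\avg{\sigma_i}}{g_{ij}}$ hidden inside $\fpar{r_i}{g_{ij}}$; these are double sums built from connected correlations such as $\avg{\sigma_i\sigma_j\sigma_k}-\avg{\sigma_k}\avg{\sigma_i\sigma_j}$. A direct bound on them is too crude; instead one has to extract the near-cancellation $\frac1N\sum_j\avg{\sigma_j}\bigl(\avg{\sigma_i\sigma_j\sigma_k}-\avg{\sigma_k}\avg{\sigma_i\sigma_j}\bigr)\approx 0$ coming from the approximate factorization of the quenched correlation functions, and use the $L^2$-smallness of the connected correlations — again quantified by \eqref{high} — while for the terms that still carry a loose Gaussian factor $g_{ik}$ one performs a further Gaussian integration by parts, which brings in $f''$ (hence the mollification). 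Putting the pieces together gives a bound of the form $C(\beta,h)\|u\|_\infty\bigl(\delta+\delta^{-1}N^{-1/2}\bigr)$, and taking $\delta\sim N^{-1/4}$ yields the theorem, with the statement about $\avg{\sigma_i}$ following from Corollary~\ref{tap}. I expect the main obstacle to be precisely this last stage: keeping track of and estimating the reaction terms through the connected-correlation bounds, and controlling the mollification error by the bootstrap — and it is the unavoidability of the mollification, forced by $u$ being only bounded measurable, that degrades the rate from $N^{-1/2}$ to $N^{-1/4}$.
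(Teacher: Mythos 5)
Your overall strategy — Stein's method for $N(0,q)$ combined with Gaussian integration by parts in the disorder, using the high-temperature concentration \eqref{high} to close the estimates — matches the paper's, but the route you take through the reaction terms is genuinely different and, I think, unnecessarily hard. Where you propose to deal with the loose Gaussian factors in $\fpar{r_i}{g_{ij}}$ by a \emph{second} integration by parts, which forces $f''$ into the picture and hence a mollification of $u$ with a bootstrap on the total variation distance, the paper sidesteps $f''$ entirely. The key move is to write the second-family contribution as $q\,\ee\bigl(f'(r_1)\avg{\eta_1}\bigr)$ with $\eta_1 = 1 + \beta\dot{\ell}_1\sigma_1 - \beta^2(1-q)\xxi_1\sigma_1$, so that $\fpar{r_1}{g_{1k}} = \avg{\eta_1\sigma_k}/\sqrt{N}$ exactly, and then to prove $\ee\bigl(\avg{\eta_1}-1\bigr)^2 \le C(\beta,h)/\sqrt{N}$ by applying the approximation lemma (Lemma \ref{mainlmm}) to the representation $\avg{\eta_1}=1+\sum_j g_{1j}h_j - \beta^2(1-q)(1-\avg{\sigma_1}^2)$ with $h_j = \beta\avg{\xxi_j\sigma_1}/\sqrt{N}$ — functions that do not involve $f$ at all, so no second derivative of $f$ ever appears. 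The $N^{-1/4}$ rate then drops out of Cauchy--Schwarz: $q\bigl|\ee\bigl(f'(r_1)(\avg{\eta_1}-1)\bigr)\bigr| \le q\|f'\|_\infty\bigl(\ee(\avg{\eta_1}-1)^2\bigr)^{1/2}$, whereas yours comes from optimizing $\delta + \delta^{-1}N^{-1/2}$. Your version can probably be pushed through, but it buys nothing and costs the mollification/bootstrap machinery plus the careful tracking of all the $f''$-terms that you yourself flag as the main obstacle; the paper's factorization of $\avg{\eta_1}$ isolates exactly the spin-correlation quantity whose $L^2$-smallness is the real content, and lets Lemma \ref{mainlmm} do the work cleanly with $u$ only bounded measurable from the start.
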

\noindent The rest of the paper is organized as follows. Since the complete proofs involve some heavy computations, we give brief sketches of the proofs in the next section. The details are in Section \ref{proofs}. Section \ref{proofs} also contains a development of Stein's method for mixture Gaussian densities.

\section{Proof outlines}
In this section, we sketch the proofs of the theorem from Section \ref{intro} in the order of difficulty.
\vskip.1in
\noindent {\it Sketch of the proof of Theorem \ref{cavity}.} Recall the definition \eqref{cavitydef} of the cavity field $\ell$ and let $r = \langle\ell\rangle$.  Take any smooth function $f:\rr^2 \ra \rr$ and for each $j$, let
\[
h_j := \frac{1}{\sqrt{N}} \avg{\bigl(\sigma_j - \avg{\sigma_j}\bigr) f(\ell, r)}.
\]
Then
\[
\sum_{j=1}^N g_j h_j  =  \avg{(\ell -r) f(\ell, r)}.
\]
A careful calculation shows that under \eqref{high}, the approximation lemma can be applied, and 
\[
\sum_{j=1}^N \fpar{h_j}{g_j} \approx (1-q) \bigavg{\fpar{f}{x}(\ell,r)}.
\]
Combining, we get that for any smooth $f:\rr^2 \ra \rr$, 
\[
\bigavg{\frac{\ell - r}{1-q} f(\ell,r) - \fpar{f}{x}(\ell,r)} \approx 0.
\]
This shows that the law of $\ell$ under the Gibbs measure approximately satisfies the characterizing equation for the Gaussian law with mean $r$ and variance $1-q$. The proof can now be completed by standard techniques from Stein's method.
\vskip.1in
\noindent {\it Sketch of the proof of Theorem \ref{hamil}.}
Recall the definition \eqref{hamildef} of the centered hamiltonian $H$, and take any smooth function $f:\rr\ra\rr$. For each $i< j\le N$ let
\[
h_{ij} = \frac{1}{N}\avg{\sigma_i\sigma_j f(H)}.
\]
Then
\[
\sum_{i<j\le N} g_{ij} h_{ij} = \bigavg{\biggl(H + \frac{\sqrt{N}\beta}{2}\biggr) f(H)}.
\]
In the regime $\beta < 1$, $h = 0$, it is known that $R_{12} = O(N^{-1/2})$. Using this fact and some calculations, it follows that the approximation lemma can be applied to the collection $(g_{ij}, h_{ij})_{i<j\le N}$, and also that
\[
\sum_{i<j} \fpar{h_{ij}}{g_{ij}} \approx \frac{1}{2}\avg{f'(H)} + \frac{\sqrt{N}\beta}{2}\avg{f(H)}.
\]
This shows that for any smooth $f$,
\[
\avg{ H f(H) - {\textstyle\frac{1}{2}} f'(H)} \approx 0,
\]
and Stein's method does the rest.
\vskip.1in
\noindent{\it Sketch of the proof of Theorem \ref{mainthm}.} 
Recall the definitions \eqref{li} and \eqref{ri} of $\ell_i$ and $r_i$. It suffices to prove the theorem for $i = 1$. Take any smooth $f:\rr^2 \ra \rr$, and for each $2\le j\le N$, let
\[
h_j(g) =\frac{1}{\sqrt{N}} \avg{\bigl(\sigma_j - \avg{\sigma_j}\bigr)f(\ell_1,r_1)}.
\]
Then 
\[
\sum_{j=2}^N g_{1j} h_j = \avg{(\ell_1 - r_1) f(\ell_1,r_1)}.
\]
Now $h_j$ depends not only on $(g_{1j})_{2\le j\le N}$ but also on $(g_{ij})_{2\le i<j\le N}$. However, we can condition on $(g_{ij})_{2\le i<j\le N}$ and then apply the approximation lemma to show that
\[
\sum_{j=2}^N g_{1j} h_j \approx \sum_{j=2}^N \fpar{h_j}{g_{1j}}.
\]
In a number of steps, one can show that under \eqref{high},
\[
\sum_{j=2}^N \fpar{h_j}{g_{1j}} \approx (1-q) \bigavg{\beta \tanh(\beta \ell_1 + h) f(\ell_1,r_1) + \fpar{f}{x}(\ell_1,r_1)}.
\]
Combining it follows that for any smooth $f:\rr^2 \ra \rr$,
\[
\bigavg{\biggl(\frac{\ell_1-r_1}{1-q} - \beta \tanh(\beta \ell_1 + h)\biggr) f(\ell_1,r_1) - \fpar{f}{x}(\ell_1,r_1)} \approx 0.
\]
It turns out that an exact equality in the above equation characterizes the distribution $\nu_1$ from Theorem \ref{mainthm}. The proof can now be completed by Stein's method.
\vskip.1in
\noindent{\it Sketch of the proof of Theorem \ref{newthm}.} 
Again, it suffices to prove the theorem just for $r_1$. By a series of steps involving integration by parts and applications of the high temperature condition \eqref{high}, one can show that for any smooth $f:\rr\ra \rr$,
\begin{equation}\label{rcond}
\ee(r_1 f(r_1)) \approx q \ee \bigl(f'(r_1)\avg{\eta_1}\bigr),
\end{equation}
where
\[
\eta_1 = 1 + \frac{\beta\sigma_1}{\sqrt{N}}\sum_{j=2}^Ng_{1j}\bigl(\sigma_j - \avg{\sigma_j}\bigr) - \beta^2(1-q)\bigl(1 - \avg{\sigma_1}\sigma_1\bigr),
\]
Now let
\[
h_j = \frac{\beta\avg{\bigl(\sigma_j - \avg{\sigma_j}\bigr) \sigma_1}}{\sqrt{N}}.
\]
Clearly,
\begin{equation}\label{etacond}
\avg{\eta_1} = 1 + \sum_{j=2}^N g_{1j} h_j - \beta^2(1-q)\bigl(1-\avg{\sigma_1}^2\bigr).
\end{equation}
A series of steps using the high temperature condition \eqref{high} give
\[
\sum_{j=2}^N \fpar{h_j}{g_{1j}} \approx \beta^2(1-q)\bigl(1-\avg{\sigma_1}^2\bigr).
\]
Applications of \eqref{high} also imply that the approximation lemma can be used in this case to deduce that
\[
\sum_{j=2}^N g_{1j}h_j \approx \sum_{j=2}^N \fpar{h_j}{g_{1j}},
\]
and therefore the last two terms in \eqref{etacond} approximately cancel each other out, leaving us with the conclusion that $\langle \eta_1\rangle \approx 1$. Combining with \eqref{rcond}, we see that for any smooth $f:\rr \ra \rr$, $\ee (r_1 f(r_1)) \approx q \ee(f'(r_1))$. The proof is now completed by Stein's method.

\section{Complete proofs}\label{proofs}
\subsection{Some estimates}
Applying Lemma \ref{mainlmm} in our problems require a substantial amount of computation. The purpose of this subsection is to organize the computations into a friendly and accessible system.

In this subsection and the rest of the manuscript, we switch to the convention that $C(\beta,h)$ denotes any constant that depends only on $\beta$ and $h$. In particular, the value of $C(\beta,h)$ may change from line to line. 


Let us first recall our conventions. Configurations chosen independently given the disorder are denoted by $\sigma^1, \sigma^2$, etc.  The overlap between $\sigma^1$ and $\sigma^2$ is defined as
\[
R_{12} = \frac{1}{N}\sum_{i=1}^N \sigma_i^1 \sigma_i^2.
\]
Recall that we have a number $q$, depending on $\beta$ and $h$, such that
\begin{equation}\label{rconv}
\ee\avg{(R_{12}-q)^4} \le \frac{C(\beta,h)}{N^2}.
\end{equation}
Let us begin our computations with the following straightforward formula: For any function $v = v(g, \sigma)$ of the disorder $g$ and the configuration $\sigma$, and any $i,j$, we have
\begin{equation}\label{deriv1}
\begin{split}
\fpar{\avg{v}}{g_{ij}} &= \bigavg{\fpar{v}{g_{ij}}} + \frac{\beta}{\sqrt{N}}\avg{v\bigl(\sigma_i\sigma_j - \avg{\sigma_i\sigma_j}\bigr)}  \\
&= \bigavg{\fpar{v}{g_{ij}}} + \frac{\beta}{\sqrt{N}}\avg{\big(v-\avg{v}\bigr)\sigma_i\sigma_j}.
\end{split}
\end{equation}
For each $j$, let
\begin{equation}\label{xidef}
\xxi_j = \xxi_j(g,\sigma) = \sigma_j - \avg{\sigma_j}.
\end{equation}
Then by \eqref{deriv1},
\begin{equation}\label{xider}
\fpar{\xxi_j}{g_{kl}} = -\fpar{\avg{\sigma_j}}{g_{kl}} = -\frac{\beta}{\sqrt{N}} \avg{\xxi_j\sigma_k\sigma_l}.
\end{equation}
Now let $v(g,\sigma)$ be a bounded function of $g$ and $\sigma$. Then
\begin{align*}
\frac{1}{N}\sum_{j=1}^N\ee\avg{\xxi_jv}^2 &= \frac{1}{N}\sum_{j=1}^N \ee\avg{\bigl(\sigma_j^1 - \avg{\sigma_j}\bigr)\bigl(\sigma_j^2 - \avg{\sigma_j}\bigr)v(g,\sigma^1)v(g,\sigma^2)}\\
&= \ee\avg{\bigl(R_{12} - R_{14} - R_{23}+ R_{34}\bigr)v(g,\sigma^1) v(g,\sigma^2)}.
\end{align*}
From this and the inequality \eqref{rconv}, we get
\begin{equation}\label{est1}
\frac{1}{N}\sum_{j=1}^N \ee\avg{\xxi_j v}^2 \le \frac{C(\beta,h)\bigl(\ee\avg{v^4}\bigr)^{1/2}}{\sqrt{N}}.
\end{equation}
Next, note that
\begin{align*}
\frac{1}{N}\sum_{j=1}^N\avg{\xxi_j\sigma_j v}&= \frac{1}{N} \sum_{j=1}^N\bigl(\avg{v(g,\sigma)} - \avg{\sigma_j^1\sigma_j^2 v(g, \sigma^1)}\bigr)\\
&= \avg{(1-R_{12})v(g,\sigma^1)}.
\end{align*}
Thus, we have
\begin{equation}\label{est2}
\ee\biggl(\frac{1}{N}\sum_{j=1}^N\avg{\xxi_j\sigma_j v} - (1-q)\avg{v}\biggr)^2 \le \frac{C(\beta,h) \bigl(\ee\avg{v^4}\bigr)^{1/2}}{N}.
\end{equation}
If $w$ is another function of $g$ and $\sigma$, then
\begin{align*}
\frac{1}{N}\sum_{j=1}^N\avg{\sigma_j v}\avg{\sigma_j w} &= \avg{R_{12}v(g,\sigma^1)w(g,\sigma^2)}.
\end{align*}
Thus, we have
\begin{equation}\label{est30}
\ee\biggl(\frac{1}{N}\sum_{j=1}^N\avg{\sigma_j v}\avg{\sigma_j w} - q\avg{v}\avg{w}\biggr)^2 \le \frac{C(\beta,h)\bigl(\ee\avg{v^4}\avg{w^4}\bigr)^{1/2}}{N}.
\end{equation}
Next, note that
\begin{align*}
\frac{1}{N}\sum_{j=1}^N\avg{\xxi_j v}\avg{\sigma_j w} &= \frac{1}{N}\sum_{j=1}^N\bigl(\avg{\sigma_j v}\avg{\sigma_j w} - \avg{\sigma_j} \avg{v} \avg{\sigma_j w}\bigr) \\
&= \avg{(R_{12}- R_{13})v(g,\sigma^1)w(g,\sigma^2)}. 
\end{align*}
This shows that
\begin{equation}\label{est3}
\frac{1}{N}\sum_{j=1}^N\ee\bigl( \avg{\xxi_j v}\avg{\sigma_j w} \bigr) \le \frac{C(\beta,h)\bigl(\ee\avg{v^2}\avg{w^2}\bigr)^{1/2}}{\sqrt{N}},
\end{equation}
and moreover
\begin{equation}\label{est31}
\ee\biggl(\frac{1}{N}\sum_{j=1}^N \avg{\xxi_j v}\avg{\sigma_j w} \biggr)^2 \le \frac{C(\beta,h)\bigl(\ee\avg{v^4}\avg{w^4}\bigr)^{1/2}}{N}.
\end{equation}
The inequality \eqref{est3} readily implies the following important lemma.
\begin{lmm}\label{jkbd}
Let $v_1,\ldots,v_N, w_1,\ldots, w_N$ be arbitrary functions of $g$ and $\sigma$. Then we have 
\[
\ee\biggl(\frac{1}{N^2}\sum_{j,k=1}^N \avg{\xxi_j v_k}\avg{w_k \sigma_j}\biggr) \le \frac{C(\beta,h)}{N^{3/2}} \sum_{k=1}^N \bigl(\ee\avg{v_k^2}\avg{w_k^2}\bigr)^{1/2}.
\]
\end{lmm}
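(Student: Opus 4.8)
The plan is to obtain this as an immediate consequence of the estimate \eqref{est3}, applied separately for each index $k$. First I would use linearity of expectation and the finiteness of the sums to rewrite the left-hand side as
\[
\ee\biggl(\frac{1}{N^2}\sum_{j,k=1}^N \avg{\xxi_j v_k}\avg{w_k \sigma_j}\biggr) = \frac{1}{N}\sum_{k=1}^N\biggl(\frac{1}{N}\sum_{j=1}^N \ee\bigl(\avg{\xxi_j v_k}\avg{\sigma_j w_k}\bigr)\biggr).
\]
The inner quantity in parentheses is exactly the object bounded in \eqref{est3}, with the choice $v = v_k$ and $w = w_k$.

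Next, for each fixed $k$ I would invoke \eqref{est3} to bound the inner sum by $C(\beta,h)\bigl(\ee\avg{v_k^2}\avg{w_k^2}\bigr)^{1/2}/\sqrt{N}$. Summing over $k$ and keeping the leftover factor $1/N$ then gives
\[
\ee\biggl(\frac{1}{N^2}\sum_{j,k=1}^N \avg{\xxi_j v_k}\avg{w_k \sigma_j}\biggr) \le \frac{1}{N}\sum_{k=1}^N \frac{C(\beta,h)\bigl(\ee\avg{v_k^2}\avg{w_k^2}\bigr)^{1/2}}{\sqrt{N}} = \frac{C(\beta,h)}{N^{3/2}}\sum_{k=1}^N \bigl(\ee\avg{v_k^2}\avg{w_k^2}\bigr)^{1/2},
\]
which is the desired bound.

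There is no real obstacle here beyond \eqref{est3} itself, but one subtlety is worth flagging: the products $\avg{\xxi_j v_k}\avg{\sigma_j w_k}$ are signed, so one cannot hope to control the double sum term by term. The cancellation is already built into \eqref{est3}, whose proof rewrites $\frac{1}{N}\sum_j \avg{\xxi_j v}\avg{\sigma_j w}$ as $\avg{(R_{12}-R_{13})\,v(g,\sigma^1)\,w(g,\sigma^2)}$ and then applies the Cauchy--Schwarz inequality together with the overlap concentration \eqref{rconv}. Since the constant in \eqref{est3} does not depend on $v$ or $w$, applying it with $(v_k,w_k)$ for each $k$ and adding up the resulting estimates is legitimate, and completes the proof.
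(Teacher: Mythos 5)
Your proof is correct and matches the paper's intent exactly: the paper states that Lemma~\ref{jkbd} follows "readily" from \eqref{est3}, and your argument — pull the $k$-sum outside the expectation, apply \eqref{est3} with $(v,w)=(v_k,w_k)$ for each fixed $k$, then sum — is precisely that deduction. Your remark about the signed summands and the cancellation being absorbed into \eqref{est3} via the $R_{12}-R_{13}$ rewriting is a fair and accurate observation.
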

\noindent The above result is generally used as follows. Given functions $f_2,\ldots,f_N$ of the disorder $g$, we find $v_j$ and $w_j$ such that
\[
\fpar{f_j}{g_{1k}} = \frac{\avg{\xxi_j v_k}}{N} = \frac{\avg{w_j \sigma_k}}{N},
\]
and apply the bound from Lemma \ref{jkbd} to extract information from Lemma~\ref{mainlmm}.

The next lemma is necessary for bounding the  moments of functions of $(g,\sigma)$ that arise when we try to apply the inequalities derived above.
\begin{lmm}\label{expan}
Let $b_1(\sigma),\ldots,b_m(\sigma)$ be arbitrary functions of $\sigma$, taking values in the interval $[-1,1]$. Then for any $k\ge 1$ and any distinct collection of indices $2\le j_1,\ldots, j_k\le N$, we have
\begin{align*}
\ee\bigl(g_{1j_1}g_{1j_2}\cdots g_{1j_k}\avg{b_1}\avg{b_2}\cdots \avg{b_m}\bigr)&\le \frac{C(m, k)\beta^k}{N^{k/2}},
\end{align*}
where $C(m,k)$ is a constant depending only on $m$ and $k$.
\end{lmm}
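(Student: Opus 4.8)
The plan is to use Gaussian integration by parts repeatedly to eliminate the explicit factors $g_{1j_1},\ldots,g_{1j_k}$, trading each one for a derivative. By formula \eqref{deriv1}, differentiating any quenched average $\langle b\rangle$ with respect to $g_{1j}$ produces two kinds of terms: a term $\langle \partial b/\partial g_{1j}\rangle$ (which vanishes when $b$ depends only on $\sigma$) and a term of the form $\frac{\beta}{\sqrt N}\langle (b-\langle b\rangle)\sigma_1\sigma_j\rangle$. The crucial bookkeeping observation is that \emph{each} application of $\partial/\partial g_{1j}$ to a product of brackets $\langle b_1\rangle\cdots\langle b_m\rangle$ yields a sum of $O(m)$ terms, each carrying a prefactor $\beta N^{-1/2}$ and each still being a product of brackets of $[-1,1]$-valued functions of $\sigma$ (after absorbing the new $\sigma_1\sigma_j$ factors into the existing $b$'s, possibly introducing one extra bracket via the subtracted $\langle b\rangle$ term, which is harmless for the bound). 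So I would first set up this differentiation rule as a clean inductive step.

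The main step is the induction on $k$. Write $G_k := \ee\bigl(g_{1j_1}\cdots g_{1j_k}\, P(\sigma)\bigr)$ where $P$ is a product of at most $m' \le m + (\text{something bounded by }k)$ brackets of $[-1,1]$-valued functions. Peel off $g_{1j_1}$ by Gaussian integration by parts: since $g_{1j_1}$ is standard Gaussian and independent of $g_{1j_2},\ldots,g_{1j_k}$, we get
\[
G_k = \ee\biggl(\fpar{}{g_{1j_1}}\Bigl(g_{1j_2}\cdots g_{1j_k}\, P(\sigma)\Bigr)\biggr)
= \ee\biggl(g_{1j_2}\cdots g_{1j_k}\,\fpar{P}{g_{1j_1}}\biggr),
\]
because the $j$'s are distinct so $\partial g_{1j_r}/\partial g_{1j_1}=0$ for $r\ge 2$. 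By the differentiation rule, $\partial P/\partial g_{1j_1}$ is a sum of $O(m)$ terms each of the form $\beta N^{-1/2}\tilde P(\sigma)$ with $\tilde P$ again a product of (boundedly many more) brackets of $[-1,1]$-valued functions. Thus $|G_k| \le C(m)\,\beta N^{-1/2}\,\max |G_{k-1}|$, where each $G_{k-1}$ is a quantity of the same shape with $k-1$ remaining Gaussian factors. Iterating $k$ times gives the bound $C(m,k)\beta^k N^{-k/2}$ once we note the base case $k=0$: a product of $[-1,1]$-valued brackets has absolute value at most $1$, so $|G_0|\le 1$.

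The one point requiring care — and the place I expect the only real friction — is tracking that the number of brackets stays bounded by a constant $C(m,k)$ throughout: each differentiation can (through the $-\langle b\rangle$ term in \eqref{deriv1}) spawn one additional bracket factor, and with $k$ differentiations and a branching factor of $O(m)$ per step the number of terms and the number of brackets per term both stay bounded by some explicit function of $m$ and $k$. Since every bracket is still of a $[-1,1]$-valued function, each term contributes at most $1$ in absolute value after the Gaussian factors are exhausted, and summing $C(m,k)$ such terms each bounded by $\beta^k N^{-k/2}$ gives the claim. No concentration input (i.e.\ \eqref{rconv}) is needed here — this lemma is purely a consequence of $|\sigma_i|=1$, the structure \eqref{deriv1}, and repeated Gaussian integration by parts.
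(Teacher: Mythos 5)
Your proposal is correct and follows essentially the same route as the paper: induction on $k$ via Gaussian integration by parts, using \eqref{deriv1} to convert each differentiation of a bracket into a factor $\beta/\sqrt{N}$ times a new product of $[-1,1]$-valued brackets, with the distinctness of the indices $j_1,\ldots,j_k$ ensuring that differentiating kills no remaining Gaussian factor. The paper simply makes your bookkeeping point formal by phrasing the induction hypothesis as ``true up to $k-1$ and for any $m$,'' which absorbs the fact that each step of \eqref{deriv1} can split $\avg{b_l-\avg{b_l}\cdot}$ into two brackets and thus increase the bracket count; your argument makes the same observation in words.
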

\begin{proof}
Let us use induction on $k$. For $k=1$, observe that by integration-by-parts and \eqref{deriv1}, we have
\begin{align*}
&\ee\bigl(g_{1j_1}\avg{b_1}\avg{b_2}\cdots \avg{b_m}\bigr) = \sum_{l = 1}^m \ee \biggl(\fpar{\avg{b_l}}{g_{1j_1}} \prod_{l'\ne l} \avg{b_{l'}}\biggr)\\
&= \frac{\beta}{\sqrt{N}}\sum_{l = 1}^m \ee \biggl(\avg{\bigl(b_l - \avg{b_l}\bigr)\sigma_1\sigma_{j_1}} \prod_{l'\ne l} \avg{b_{l'}}\biggr) \le \frac{C(m)\beta}{\sqrt{N}}.
\end{align*}
Now assume that the result is true up to $k-1$ (and any $m$). Again, using integration-by-parts and \eqref{deriv1}, we have
\begin{align*}
&\ee\bigl(g_{1j_1}g_{1j_2}\cdots g_{1j_k}\avg{b_1}\avg{b_2}\cdots \avg{b_m}\bigr)\\
&\le \frac{\beta}{\sqrt{N}}\sum_{l = 1}^m \ee \biggl(g_{1j_2}\cdots g_{1j_k}\fpar{\avg{b_l}}{g_{1j_1}} \prod_{l'\ne l} \avg{b_{l'}}\biggr)\\
&= \frac{\beta}{\sqrt{N}}\sum_{l = 1}^m \ee \biggl(g_{1j_2}\cdots g_{1j_k}\avg{\bigl(b_l - \avg{b_l}\bigr)\sigma_1\sigma_{j_1}} \prod_{l'\ne l} \avg{b_{l'}}\biggr).
\end{align*}
A straightforward application of the induction hypothesis for $k-1$ completes the proof.
\end{proof}
\noindent The following function will appear several times in the sequel. 
\begin{equation}\label{l1p}
\dot{\ell}_1 =\dot{\ell}_1(g,\sigma) = \frac{1}{\sqrt{N}} \sum_{j=2}^N g_{1j} \xxi_j.
\end{equation}
Take any $k \ge 1$. A simple application of Lemma \ref{expan} to each term in the expansion of $\dot{\ell}_1^k$ shows that
\begin{equation}\label{est4}
\ee(\dot{\ell}_1^k) \le C(\beta, k).
\end{equation}
The important thing is that the bound does not depend on $N$.

\subsection{Proof of Theorem \ref{cavity}}
We will continue using the notation introduced in the previous subsections. Let us briefly recall the setting. 
Suppose $g_1,\ldots,g_N$ are i.i.d.\ standard Gaussian random variables, independent of $(g_{ij})_{i<j\le N}$. The cavity field is defined as
\[
\ell := \frac{1}{\sqrt{N}}\sum_{j=1}^N g_j \sigma_j.
\]
Our objective is to show that under the Gibbs measure, $\ell$ is approximately distributed as a Gaussian random variable with mean 
\[
r := \frac{1}{\sqrt{N}}\sum_{j=1}^N g_j\avg{\sigma_j}
\]
and variance $1 - q$.

Take any bounded measurable function $u: \rr \ra [-1,1]$ and suppose $f:\rr^2\ra \rr$ is a solution to
\[
\fpar{f}{x}(x,y) - \frac{x-y}{1-q} f(x,y) = u(x) - \int_{\rr} u(t) \phi_{y,1-q}(t) dt.
\]
For simplicity, we let $f_1$ and $f_2$ denote $\fpar{f}{x}$ and $\fpar{f}{y}$. From Lemma \ref{steinlmm} it follows that $|f|$, $|f_1|$ and $|f_2|$ are uniformly  bounded by $C(\beta,h)$. For each $j$, let
\[
h_j := \frac{1}{\sqrt{N}} \avg{\xxi_j f(\ell, r)}.
\]
Then
\begin{equation}\label{ineq141}
\sum_{j=1}^N g_j h_j  =  \avg{(\ell -r) f(\ell, r)}.
\end{equation}
In the rest of the proof, we will simply write $f$, $f_1$ and $f_2$ instead of $f(\ell,r)$, etc.
Note that for any $j,k$,
\begin{align*}
\fpar{h_j}{g_k} &= \frac{1}{N}\avg{\xxi_j \sigma_k f_1} + \frac{1}{N}\avg{\xxi_j f_2}\avg{\sigma_k}.
\end{align*}
Thus, putting 
\begin{align*}
v_k = \sigma_k f_1 + f_2 \avg{\sigma_k} \ \text{ and } \ w_j = \xxi_j f_1 + \avg{\xxi_j f_2},
\end{align*}
we see that
\[
\fpar{h_j}{g_k} = \frac{\avg{\xxi_j v_k}}{N} = \frac{\avg{w_j \sigma_k}}{N}.
\]
Hence by Lemma \ref{jkbd}, we have
\[
\sum_{j,k=1}^N \ee\biggl(\fpar{h_j}{g_k} \fpar{h_k}{g_j}\biggr) \le \frac{C(\beta,h)}{\sqrt{N}}.
\]
Again, from \eqref{est1} we have 
\[
\sum_{j=1}^N \ee(h_j^2) \le \frac{C(\beta,h)}{\sqrt{N}}.
\]
Combining and applying Lemma \ref{mainlmm}, we get
\begin{equation}\label{ineq142}
\ee\biggl(\sum_{j=1}^N g_j h_j - \sum_{j=1}^N \fpar{h_j}{g_j}\biggr)^2 \le \frac{C(\beta,h)}{\sqrt{N}}.
\end{equation}
Again, note that
\[
\sum_{j=1}^N \fpar{h_j}{g_j} = \frac{1}{N}\sum_{j=1}^N \bigl(\avg{\xxi_j \sigma_j f_1} + \avg{\xxi_j f_2}\avg{\sigma_j}\bigr).
\]
By \eqref{est2} and \eqref{est31}, this gives
\begin{equation}\label{ineq143}
\ee\biggl(\sum_{j=1}^N \fpar{h_j}{g_j} - (1-q)\avg{f_1}\biggr)^2 \le \frac{C(\beta,h)}{N}.
\end{equation}
Combining \eqref{ineq141}, \eqref{ineq142} and \eqref{ineq143}, we finally get
\begin{align*}
\ee\biggl(\avg{u(\ell)} - \int_{\rr} u(t) \phi_{r,1-q}(t) dt\biggr)^2 &= \frac{1}{(1-q)^2}\ee\avg{ (1-q)f_1 - (\ell - r) f}^2 \\
&\le \frac{C(\beta,h)}{\sqrt{N}}.
\end{align*}
This completes the proof of Theorem \ref{cavity}.

\subsection{Proof of Theorem \ref{hamil}}
Recall that the centered hamiltonian $H$ was defined  as
\[
H := \frac{1}{N}\sum_{i<j\le N} g_{ij}\sigma_j \sigma_j - \frac{\sqrt{N}\beta}{2}.
\]
Take any $u:\rr \ra [-1,1]$, and let $f$ be a solution to
\[
f'(x) - 2x f(x) = u(x) - \int_{\rr} u(t) \phi_{0,1/2}(t) dt.
\]
Again by Lemma \ref{steinlmm}, $|f|$ and $|f'|$ are uniformly bounded by $C(\beta,h)$. 
For each $i,j$, let 
\[
h_{ij} = \frac{1}{N}\avg{\sigma_i\sigma_j f(H)}.
\]
In the following, we will write $f$ and $f'$ for $f(H(\sigma))$ and $f'(H(\sigma))$ for notational convenience. When we have expressions involving multiple replicas, $f$ will stand for  $f(H(\sigma^1))$. We have 
\begin{equation}\label{hhg}
\fpar{h_{ij}}{g_{kl}} = \frac{1}{N^2}\avg{\sigma_i\sigma_j\sigma_k\sigma_lf'} + \frac{\beta}{N^{3/2}}\bigl( \avg{\sigma_i\sigma_j \sigma_k \sigma_l f} - \avg{\sigma_i\sigma_j f}\avg{\sigma_k\sigma_l}\bigr).
\end{equation}
Using identities like
\begin{align*}
&\frac{1}{N^4}\sum_{i,j,k,l} \avg{\sigma_i\sigma_j\sigma_k\sigma_l f}^2 = \avg{R_{12}^4 f}, \\
&\frac{1}{N^4}\sum_{i,j,k,l} \avg{\sigma_i\sigma_j\sigma_k\sigma_l f}\avg{\sigma_i\sigma_j f}\avg{\sigma_k \sigma_l} = \avg{R_{12}^2 R_{13}^2 f}, \ \ \text{etc.,}
\end{align*}
we get
\[
\sum_{i<j,\ k< l} \ee\biggl(\fpar{h_{ij}}{g_{kl}} \fpar{h_{kl}}{g_{ij}}\biggr) \le C(\beta) N \ee\avg{R_{12}^4} \le \frac{C(\beta)}{N}.
\]
Similarly,
\[
\sum_{i<j} \ee(h_{ij}^2) \le C(\beta) \ee\avg{R_{12}^2} \le \frac{C(\beta)}{N}.
\]
Another similar verification starting from the formula \eqref{hhg} shows that
\[
\sum_{i<j} \fpar{h_{ij}}{g_{ij}} = \frac{1}{2}\avg{f'} + \frac{\sqrt{N}\beta}{2}\avg{f} + \mathcal{R},
\]
where $\mathcal{R}$ is a remainder term satisfying 
\[
\ee(\mathcal{R}^2)\le \frac{C(\beta)}{N}.
\]
The proof is now completed by applying Lemma \ref{mainlmm}.

\subsection{Proof of Theorem \ref{mainthm}}
It suffices to prove the result for $i=1$. Note that $\nu_1$ is simply the probability distribution $M(\beta,h, r_1,1-q)$. 
Without loss of generality, we can assume that $\|u\|_\infty \le 1$. Suppose $f:\rr^2 \ra \rr$ is a solution of the differential equation
\begin{equation}\label{fdef}
\begin{split}
&\fpar{f}{x}(x,y) - \biggl(\frac{x-y}{1-q} - \beta \tanh(\beta x + h)\biggr) f(x,y) \\
&= u(x) - \int_{\rr} u(t) \psi_{\beta, h, y,1-q}(t) dt.
\end{split}
\end{equation}
By Lemma \ref{steinlmm}, such an $f$ exists and moreover, we can guarantee that $|f|$, $\bigl|\fpar{f}{x}\bigr|$, and $\bigl|\fpar{f}{y}\bigr|$ are all bounded by $C(\beta,h)$. As before, to lighten notation, we let $f_1$ and $f_2$ denote the two partial derivatives of $f$. We have to prove that for any $i$, 
\[
\ee\biggl(\avg{u(\ell_i)} - \int_{\rr} u(x) \psi_{\beta, h, r_i, 1-q} dx\biggr)^2 \le \frac{C(\beta,h) \|u\|_\infty^2}{\sqrt{N}},
\]
where 
\begin{align*}
r_i &= r_i(g) := \frac{1}{\sqrt{N}} \sum_{j\ne i} g_{ij} \avg{\sigma_j} - \beta(1-q)\avg{\sigma_i}.
\end{align*}
By the definition and properties of $f$, this is clearly equivalent to proving that
\[
\ee\bigavg{f_1(\ell_i,r_i) - \biggl(\frac{\ell_i - r_i}{1-q} - \beta \tanh(\beta \ell_i + h)\biggr) f(\ell_i,r_i)}^2 \le \frac{C(\beta,h)}{\sqrt{N}},
\]
and this is what we aim to prove in the next few pages. Note that it suffices to fix $i=1$.  
Recall that we defined
\[
\xxi_j := \sigma_j - \avg{\sigma_j}. 
\]
For each $j\ge 2$, let
\[
h_j(g) =\frac{1}{\sqrt{N}} \avg{\xxi_jf(\ell_1, r_1)}, 
\]
where recall that
\[
\ell_1 = \ell_1(g, \sigma) = \frac{1}{\sqrt{N}} \sum_{j=2}^N g_{1j} \sigma_j
\]
and
\[
r_1 = r_1(g) = \frac{1}{\sqrt{N}} \sum_{j=2}^N g_{1j} \avg{\sigma_j} - \beta(1-q)\avg{\sigma_1}.
\]
In what follows, the random variable $f(\ell_1,r_1)$  is simply denoted by $f$ to lighten notation. The distinction between the random variable $f$ and the function $f$ should be clear from the context. Similar remarks apply to $f_1$ and $f_2$ also.

Now for any $j,k\ge 2$, simple applications of \eqref{deriv1} and \eqref{xider} gives 
\begin{align}
\fpar{h_j}{g_{1k}} &= \frac{1}{\sqrt{N}}\bigavg{\fpar{\xxi_j}{g_{1k}} f}  + \frac{1}{\sqrt{N}}\bigavg{\xxi_j f_1 \fpar{\ell_1}{g_{1k}}} + \frac{1}{\sqrt{N}} \avg{\xxi_j f_2} \fpar{r_1}{g_{1k}}\nonumber \\
&\quad + \frac{\beta}{N}\avg{(\xxi_j f)\bigl(\sigma_1\sigma_k - \avg{\sigma_1\sigma_k}\bigr)}\nonumber \\
&= - \frac{\beta}{N} \avg{\xxi_j \sigma_1\sigma_k}\avg{f} +  \frac{1}{N}\avg{\xxi_j f_1 \sigma_k} + \frac{1}{\sqrt{N}} \avg{\xxi_j f_2} \fpar{r_1}{g_{1k}} \label {hder} \\
&\quad + \frac{\beta}{N}\avg{(\xxi_j f)\bigl(\sigma_1\sigma_k - \avg{\sigma_1\sigma_k}\bigr)}.\nonumber
\end{align}
A further use of \eqref{deriv1} and \eqref{xider} gives
\[
\fpar{r_1}{g_{1k}} = \frac{\avg{\sigma_k}}{\sqrt{N}} + \frac{\beta}{N}\sum_{l\ge 2} g_{1l}\avg{\xxi_l \sigma_1\sigma_k} - \frac{\beta^2(1-q)}{\sqrt{N}} \avg{\xxi_1\sigma_1\sigma_k}.
\]
Recalling the definition \eqref{l1p} of $\dot{\ell}_1$ and putting
\begin{equation}\label{eta1def}
\eta_1 := 1 + \beta\dot{\ell}_1\sigma_1 - \beta^2(1-q)\xxi_1\sigma_1,
\end{equation}
we see that
\begin{equation}\label{r1deriv}
\begin{split}
\fpar{r_1}{g_{1k}} &= \frac{\avg{\eta_1 \sigma_k}}{\sqrt{N}}.
\end{split}
\end{equation}
Thus, putting
\begin{align*}
v_k &= -\beta \sigma_1 \sigma_k\avg{f}  + f_1\sigma_k  + f_2\avg{\eta_1\sigma_k} +\beta f\sigma_1\sigma_k - \beta f\avg{\sigma_1\sigma_k}, \ \ \text{and}\\
w_j &= -\beta \xxi_j \sigma_1 \avg{f}  + \xxi_j f_1 + \avg{\xxi_j f_2}\eta_1 + \beta \xxi_j f \sigma_1 - \beta \avg{\xxi_j f}\sigma_1,
\end{align*}
and organizing the terms in \eqref{hder}, we get
\[
\fpar{h_j}{g_{1k}} = \frac{\avg{\xxi_j v_k}}{N} = \frac{\avg{w_j \sigma_k}}{N}.
\]
Since $f$, $f_1$, and $f_2$ are uniformly bounded by $C(\beta,h)$ and $\ee\avg{\dot{\ell}_1^4} \le C(\beta,h)$ by \eqref{est4}, an application of Lemma \ref{jkbd} gives
\begin{align}\label{term2}
\ee\biggl(\sum_{j,k=2}^N \fpar{h_j}{g_{1k}}\fpar{h_k}{g_{1j}}\biggr) &= \ee\biggl(\frac{1}{N^2} \sum_{j,k=2}^N \avg{\xxi_j v_k} \avg{w_k \sigma_j}\biggr) \le \frac{C(\beta,h)}{\sqrt{N}}.
\end{align}
Again, since $f$ is bounded by $C(\beta,h)$, we can use \eqref{est1} to get
\begin{equation}\label{term1}
\sum_{j=2}^N\ee(h_j^2) \le \frac{C(\beta,h)}{\sqrt{N}}.
\end{equation}
Applying Lemma \ref{mainlmm}, using the bounds \eqref{term1} and \eqref{term2} obtained above, we finally get
\begin{equation}\label{bd1}
\begin{split}
\ee\biggl(\sum_{j=2}^Ng_{1j} h_j - \sum_{j=2}^N \fpar{h_j}{g_{1j}}\biggr)^2 &= \sum_{j=2}^N\ee(h_j^2) + \sum_{j,k=2}^N \ee\biggl(\fpar{h_j}{g_{1k}}\fpar{h_k}{g_{1j}}\biggr) \\
&\le \frac{C(\beta,h)}{\sqrt{N}}.
\end{split}
\end{equation}
Note that although $h_j$ is a function of the whole collection $(g_{jk})_{1\le j<k\le N}$, we can first condition on $(g_{jk})_{2\le j < k\le N}$ and apply Lemma \ref{mainlmm}, and then take the unconditional expectation to get the first line in \eqref{bd1}.

Now let us define
\[
\gamma_1 := -\beta \sigma_1 \avg{f} + f_1 +\beta \sigma_1 f.
\]
Then from the expressions \eqref{hder} and \eqref{r1deriv} we see that
\[
\sum_{j=2}^N \fpar{h_j}{g_{1j}} = \frac{1}{N}\sum_{j=2}^N \biggl(\avg{\xxi_j \sigma_j \gamma_1} + \avg{\xxi_j f_2} \avg{\eta_1 \sigma_j} - \beta \avg{\xxi_j f}\avg{\sigma_1 \sigma_j}\biggr)
\]
Applying \eqref{est2} for the first term and \eqref{est31} for the second and third terms, we~have 
\begin{align}\label{bd2}
\ee\biggl(\sum_{j=2}^N\fpar{h_j}{g_{1j}} - (1-q)\avg{\gamma_1}\biggr)^2 &\le \frac{C(\beta,h)}{N}.
\end{align}
Note the most crucial point in the derivation of \eqref{bd1} and \eqref{bd2} is that by Lemma \ref{steinlmm}, the bounds on $f$ and its derivatives  depend only on $(\beta,h)$. The parameter $\mu$ in the mixture Gaussian distribution, which equals $r_1$ in this proof, does not actually behave as a fixed parameter because we have defined $f$ as a function of two variables, one of which is $\mu$. This is why we need to have $f$ defined on $\rr^2$ instead of $\rr^1$.

Now $f = f(\ell_1,r_1)$ does not depend on $\sigma_1$. Also recall that under the Gibbs measure, the conditional expectation of $\sigma_1$ given $\sigma_2,\ldots, \sigma_N$ is simply $\tanh(\beta \ell_1 + h)$. Thus,
\[
\avg{\sigma_1f} = \avg{\tanh(\beta \ell_1 + h) f}.
\]
Using the above identity to compute $\avg{\gamma_1}$, we see that
\begin{align*}
&\sum_{j=2}^N g_{1j}h_j - (1-q)\avg{\gamma_1} \\
&= \avg{\bigl(\ell_1-r_1 - \beta(1-q) \tanh(\beta \ell_1 + h)\bigr) f} - (1-q)\avg{f_1}.
\end{align*}
Combining \eqref{bd1} and \eqref{bd2}, and dividing by $1-q$ throughout, we get
\[
\ee\bigavg{f_1 - \biggl(\frac{\ell_1 - r_1}{1-q} - \beta \tanh(\beta \ell_1 + h)\biggr) f}^2 \le \frac{C(\beta,h)}{\sqrt{N}}.
\]
But by the definition \eqref{fdef} of $f$, 
\begin{align*}
&f_1(\ell_1,r_1) - \biggl(\frac{\ell_1 - r_1}{1-q} - \beta \tanh(\beta \ell_1 + h)\biggr) f(\ell_1,r_1) \\
&= u(\ell_1) - \int_{\rr} u(x) \psi_{\beta, h, r_1,1-q}(x) dx.
\end{align*}
This completes the proof of Theorem \ref{mainthm}.

\subsection{Proof of Theorem \ref{newthm}}
Recall the definitions
\[
r_1 :=  \frac{1}{\sqrt{N}} \sum_{j=2}^N g_{1j} \avg{\sigma_j} - \beta(1-q)\avg{\sigma_1}
\]
and 
\begin{equation*}
\eta_1 := 1 + \beta\dot{\ell}_1\sigma_1 - \beta^2(1-q)\xxi_1\sigma_1.
\end{equation*}
Let $u:\rr \ra [-1,1]$ be a measurable map. By Lemma \ref{steinlmm}, there exists an absolutely continuous function $f:\rr \ra \rr$ such that
\[
f'(x)-\frac{x}{q}f(x) = u(x) - \ee u(z\sqrt{q}),
\]
where $z$ is a standard Gaussian random variable, and moreover $|f|$ and $|f'|$ can be uniformly bounded by $C(\beta,h)$. From the definition \eqref{ri} of $r_1$, we see that
\begin{equation}\label{step1}
r_1 f(r_1) = \frac{1}{\sqrt{N}} \sum_{j=2}^N g_{1j} \avg{\sigma_j} f(r_1) - \beta(1-q) \avg{\sigma_1} f(r_1).
\end{equation}
Now, by integration-by-parts and the identities \eqref{xider} and \eqref{r1deriv}, we have
\begin{align*}
\ee\bigl(g_{1j} \avg{\sigma_j} f(r_1)\bigr) &= \ee\biggl(\fpar{\avg{\sigma_j}}{g_{1j}} f(r_1) + \avg{\sigma_j} f'(r_1) \fpar{r_1}{g_{1j}}\biggr)\\
&= \frac{\beta}{\sqrt{N}} \ee\bigl(\avg{\xxi_j\sigma_1\sigma_j} f(r_1)\bigr) + \frac{1}{\sqrt{N}}\ee\bigl(\avg{\sigma_j}\avg{\eta_1 \sigma_j}f'(r_1)\bigr).
\end{align*}
Thus, we have
\begin{equation}\label{step2}
\begin{split}
\ee\biggl(\frac{1}{\sqrt{N}}\sum_{j=2}^N g_{1j}\avg{\sigma_j} f(r_1)\biggr) &= \ee\biggl(\beta  f(r_1)\frac{\sum_{j=2}^N \avg{\xxi_j \sigma_1\sigma_j}}{N}\biggr)\\
&\quad  + \ee\biggl(f'(r_1) \frac{\sum_{j=2}^N \avg{\sigma_j}\avg{\eta_1\sigma_j}}{N}\biggr).
\end{split}
\end{equation}
By \eqref{est2} and the bound on $|f|$, we have
\begin{equation}\label{step3}
\biggl|\ee\biggl(\beta  f(r_1)\frac{\sum_{j=2}^N \avg{\xxi_j \sigma_1\sigma_j}}{N}\biggr) - \beta(1-q) \ee\bigl(f(r_1)\avg{\sigma_1}\bigr)\biggr| \le \frac{C(\beta,h)}{\sqrt{N}}.
\end{equation}
Similarly, by \eqref{est30} and the bound on $|f'|$, 
\begin{equation}\label{step4}
\biggl|\ee\biggl(f'(r_1) \frac{\sum_{j=2}^N \avg{\sigma_j}\avg{\eta_1\sigma_j}}{N}\biggr) - q\ee\bigl(f'(r_1)\avg{\eta_1}\bigr)\biggr| \le \frac{C(\beta,h)}{\sqrt{N}}.
\end{equation}
Combining \eqref{step1}, \eqref{step2}, \eqref{step3}, and \eqref{step4}, we get
\begin{equation*}\label{step5}
\bigl|\ee(r_1 f(r_1)) - q\ee\bigl(f'(r_1)\avg{\eta_1}\bigr)| \le \frac{C(\beta,h)}{\sqrt{N}}.
\end{equation*}
Since $f'$ is bounded and
\begin{align*}
\ee u(r_1) - \ee u(z\sqrt{q}) &= q^{-1}\ee(qf'(r_1)-r_1 f(r_1)),
\end{align*}
the proof will be complete if we can show that 
\begin{equation}\label{finalstep}
\ee\bigl(\avg{\eta_1} - 1\bigr)^2 \le \frac{C(\beta,h)}{\sqrt{N}}.
\end{equation}
The rest of the proof is devoted to proving \eqref{finalstep}. From the definition \eqref{eta1def} of $\eta_1$, we get
\[
\avg{\eta_1} = 1 + \frac{\beta}{\sqrt{N}}\sum_{j=2}^N g_{1j} \avg{\xxi_j \sigma_1} - \beta^2 (1-q) \avg{\xxi_1\sigma_1}.
\]
Now let 
\[
h_j = \frac{\beta\avg{\xxi_j \sigma_1}}{\sqrt{N}},
\]
so that
\[
\avg{\eta_1} = 1 + \sum_{j=2}^N g_{1j} h_j - \beta^2(1-q)\bigl(1-\avg{\sigma_1}^2\bigr).
\]
Our intention is to apply Lemma \ref{mainlmm} to show that the second and the third terms approximately cancel each other out. First, note that by equations ~\eqref{deriv1} and \eqref{xider}, 
\begin{equation}\label{fjder}
\begin{split}
\fpar{h_j}{g_{1k}} = \frac{\beta^2}{N} \bigl(-\avg{\xxi_j \sigma_1 \sigma_k} \avg{\sigma_1} + \avg{\xxi_j \sigma_k} - \avg{\xxi_j\sigma_1}\avg{\sigma_1\sigma_k}\bigr).
\end{split}
\end{equation}
Thus, if we put
\begin{align*}
v_k &= -\sigma_1 \sigma_k \avg{\sigma_1} + \sigma_k - \sigma_1 \avg{\sigma_1 \sigma_k}, \ \ \text{and}\\
w_j &= -\xxi_j \sigma_1 \avg{\sigma_1} + \xxi_j - \avg{\xxi_j \sigma_1}\sigma_1,
\end{align*}
then
\[
\fpar{h_j}{g_{1k}} = \frac{\beta^2\avg{\xxi_j v_k}}{N} = \frac{\beta^2\avg{w_j \sigma_k}}{N}.
\]
From Lemma \ref{jkbd} it follows that
\[
\sum_{j,k=2}^N \ee\biggl(\fpar{h_j}{g_{1k}}\fpar{h_k}{g_{1j}}\biggr) \le \frac{C(\beta,h)}{\sqrt{N}}.
\]
Again, by \eqref{est1} we get
\[
\sum_{j=2}^N \ee h_j^2 = \frac{\beta^2}{N}\sum_{j=2}^N \ee \avg{\xxi_j \sigma_1}^2 \le \frac{C(\beta,h)}{\sqrt{N}}.
\]
Using the last two bounds in Lemma \ref{mainlmm}, we have
\begin{equation}\label{app1}
\ee\biggl(\sum_{j=2}^N g_{1j} h_j - \sum_{j=2}^N \fpar{h_j}{g_{1j}}\biggr)^2 \le \frac{C(\beta,h)}{\sqrt{N}}.
\end{equation}
Now from \eqref{fjder} we see that
\begin{align*}
\sum_{j=2}^N \fpar{h_j}{g_{1j}} &=  -\frac{\beta^2\avg{\sigma_1}}{N} \sum_{j=2}^N\avg{\xxi_j \sigma_1 \sigma_j} + \frac{\beta^2}{N}\sum_{j=2}^N\avg{\xxi_j \sigma_j} \\
&\quad \quad - \frac{\beta^2}{N}\sum_{j=2}^N\avg{\xxi_j\sigma_1}\avg{\sigma_1\sigma_j}.
\end{align*}
Applying \eqref{est2}  to the first two terms and \eqref{est3} to the third term, we get
\begin{align}\label{app2}
\ee\biggl(\sum_{j=2}^N \fpar{h_j}{g_{1j}} + \beta^2(1-q)\avg{\sigma_1}^2 - \beta^2(1-q)\biggr)^2 &\le \frac{C(\beta,h)}{N}.
\end{align}
Combining \eqref{app1} and \eqref{app2}, we have
\begin{align*}
\ee\bigl(\avg{\eta_1} - 1\bigr)^2 &= \ee\biggl(\sum_{j=2}^N g_{1j} h_j - \beta^2(1-q)\bigl(1-\avg{\sigma_1}^2\bigr)\biggr)^2  \le \frac{C(\beta,h)}{\sqrt{N}}.
\end{align*}
This proves \eqref{finalstep} and hence completes the proof of Theorem \ref{newthm}.

\subsection{Proof of Lemma \ref{steinlmm}}
For each $x,\mu\in \rr$, let
\[
\rho(x,\mu) = \cosh(ax + b) e^{-(x-\mu)^2/2\sigma^2}
\]
and 
\[
r(\mu) = \int_{-\infty}^\infty u(x)\psi_{a,b,\mu,\sigma^2}(x) dx = \frac{\int_{-\infty}^\infty u(x)\rho(x,\mu) dx}{\int_{-\infty}^\infty \rho(x, \mu) dx}.
\]
Let
\begin{align*}
f(x,\mu) &= \frac{1}{\rho(x,\mu)} \int_{-\infty}^x \rho(t,\mu)(u(t)-r(\mu)) dt \\
&= -\frac{1}{\rho(x,\mu)} \int_x^\infty \rho(t,\mu)(u(t)-r(\mu)) dt.
\end{align*}
For ease of notation, let us write $\fpar{f}{x}$ and $\fpar{f}{\mu}$ as $f_1$ and $f_2$.
Multiplying by $\rho(x,\mu)$ on both sides and differentiating w.r.t.\ $x$, and finally dividing everything by $\rho(x,\mu)$, we get
\begin{equation}\label{lasteq}
f_1(x,\mu) - \biggl(\frac{x-\mu}{\sigma^2} - a\tanh(ax+b)\biggr) f(x, \mu) = u(x) - r(\mu).
\end{equation}
This proves the first assertion of the lemma. Now  observe that for any $x \ge \mu$,
\begin{align}\label{firstbd}
&\int_x^\infty \exp\biggl(at+b - \frac{(t-\mu)^2}{2\sigma^2}\biggr) dt \nonumber \\
&= e^{b + a\mu + \frac{1}{2}a^2\sigma^2}\int_x^\infty \exp\biggl( - \frac{(t-\mu- a\sigma^2)^2}{2\sigma^2}\biggr) dt\nonumber\\
&=e^{b + a\mu + \frac{1}{2}a^2\sigma^2}\int_{\frac{x-\mu-a\sigma^2}{\sigma}}^\infty e^{-\frac{1}{2}y^2}  \sigma dy.
\end{align}
Let 
\[
C_1(a,\sigma) = \sigma \sup_{z\ge -a\sigma} e^{\frac{1}{2}z^2}\int_z^\infty e^{-\frac{1}{2}y^2} dy.
\]
It is easy to verify by elementary calculus that $C_1(a,\sigma)$ is finite. Since $x\ge \mu$, we now get from \eqref{firstbd} that 
\begin{align*}
&\int_x^\infty \exp\biggl(at+b - \frac{(t-\mu)^2}{2\sigma^2}\biggr) dt\\
&\le C_1(a,\sigma) e^{b + a\mu + \frac{1}{2}a^2\sigma^2}\exp\biggl( - \frac{(x-\mu- a\sigma^2)^2}{2\sigma^2}\biggr) \\
&= C_1(a,\sigma) \exp\biggl(ax+b - \frac{(x-\mu)^2}{2\sigma^2}\biggr).
\end{align*}
Similarly, we have
\[
\int_x^\infty \exp\biggl(-at-b - \frac{(t-\mu)^2}{2\sigma^2}\biggr) dt \le C_1(-a,\sigma) \exp\biggl(-ax-b - \frac{(x-\mu)^2}{2\sigma^2}\biggr).
\]
Combining, and putting $C_2(a,\sigma) = \max\{C_1(a,\sigma), C_1(-a,\sigma)\}$, we get that for $x\ge \mu$,
\begin{equation}\label{rhotail}
\int_x^\infty \rho(t,\mu) dt \le C_2(a,\sigma) \rho(x,\mu).
\end{equation}
Similarly, if $x < \mu$, we have
\[
\int_{-\infty}^x \rho(t,\mu) dt \le C_3(a,\sigma) \rho(x,\mu)
\]
for some other constant $C_3(a,\sigma)$. From the definition of $f$, we can now deduce that
\[
\|f\|_\infty \le C_4(a,\sigma) \|u\|_\infty,
\]
where $C_4 = 2\max\{C_2, C_3\}$.
Next, note that for $x \ge \mu$, 
\begin{align*}
&\int_x^\infty |t-\mu|\exp\biggl(at+b - \frac{(t-\mu)^2}{2\sigma^2}\biggr) dt\\
&= e^{b + a\mu + \frac{1}{2}a^2\sigma^2}\int_x^\infty (t-\mu)\exp\biggl( - \frac{(t-\mu- a\sigma^2)^2}{2\sigma^2}\biggr) dt\\
&=e^{b + a\mu + \frac{1}{2}a^2\sigma^2}\int_{\frac{x-\mu-a\sigma^2}{\sigma}}^\infty (\sigma y+a\sigma^2)e^{-\frac{1}{2}y^2}  \sigma dy.
\end{align*}
Putting
\[
C_5(a,\sigma) = \sigma \sup_{z\ge -a\sigma} e^{\frac{1}{2}z^2}\int_z^\infty(\sigma y+a\sigma^2) e^{-\frac{1}{2}y^2} dy = \sigma^2 + a\sigma^2 C_1(a,\sigma),
\]
we get 
\begin{align*}
&\int_x^\infty |t-\mu|\exp\biggl(at+b - \frac{(t-\mu)^2}{2\sigma^2}\biggr) dt\\
 &\le C_5(a,\sigma) \exp\biggl(ax+b - \frac{(x-\mu)^2}{2\sigma^2}\biggr).
\end{align*}
Proceeding as before, this leads to
\begin{equation}\label{secondbd}
\int_x^\infty|t-\mu| \rho(t,\mu) dt \le C_6(a,\sigma) \rho(x,\mu),
\end{equation}
and a similar bound for the integral from $-\infty$ to $x$ in the case $x<\mu$. An easy byproduct of these inequalities is the bound
\begin{equation}\label{thirdbd}
\sup_{x\in \rr}|(x-\mu) f(x,\mu)| \le C_7(a,\sigma)\|u\|_\infty.
\end{equation}
Using this and \eqref{lasteq}, and the previous deduction that $\|f\|_\infty \le C_4(a,\sigma)\|u\|_\infty$, it follows that
\[
|f_1(x,\mu)|\le C_8(a,\sigma)\|u\|_\infty.
\]
Now note that
\[
\frac{1}{\rho}\fpar{\rho}{\mu}= \frac{x-\mu}{\sigma^2}.
\]
Thus, for $x\ge \mu$, we have
\begin{align*}
f_2(x,\mu) &= -\frac{x-\mu}{\sigma^2}f(x,\mu) - \frac{1}{\rho(x,\mu)}\int_x^\infty \frac{t-\mu}{\sigma^2}\rho(t,\mu)(u(t)-r(\mu)) dt \\
&\qquad + \frac{r'(\mu)}{\rho(x,\mu)}\int_x^\infty \rho(t,\mu) dt.
\end{align*}
Thus from \eqref{thirdbd}, \eqref{secondbd} and \eqref{rhotail} it follows that
\[
|f_2(x,\mu)|\le C_9(a,\sigma) \|u\|_\infty + C_2(a,\sigma)|r'(\mu)|.
\]
A simple computation gives
\begin{align*}
r'(\mu) = \frac{\int_{-\infty}^\infty (u(t)-r(\mu)) \rho(t,\mu) \frac{t-\mu}{\sigma^2} dt}{\int_{-\infty}^\infty \rho(t,\mu) dt}.
\end{align*}
Thus,
\[
|r'(\mu)| \le \frac{2\|u\|_\infty}{\sigma^2}\int_{-\infty}^\infty |t-\mu|\psi_{a,b,\mu,\sigma^2}(t) dt.
\]
From the representation \eqref{mix} 
it follows that 
\[
\int_{-\infty}^\infty |t-\mu|\psi_{a,b,\mu,\sigma^2}(t) dt \le C_{10}(a,\sigma).
\]
Thus, for $x\ge \mu$, $|f_2(x,\mu)| \le C_{11}(a,\sigma) \|u\|_\infty$. 
The bound for $x<\mu$ follows similarly.

\vskip.5in
\noindent{\bf Acknowledgments.} The author thanks Michel Talagrand, Persi Diaconis  and the associate editor for various helpful suggestions. The author is also grateful to the referee for a very careful reading of the proofs and a large number of useful comments.
\vskip.3in

\end{document}